\newcommand{\R}{\mathbb{R}}
\newcommand{\N}{\mathbb{N}}
\newcommand{\places}{\mathcal{P}}
\newcommand{\ie}{i.e.}
\newcommand{\eg}{e.g.\ }
\newcommand{\transitions}{\mathcal{Q}}
\newcommand{\inc}{^{\textrm{in}}}
\newcommand{\cL}{\mathcal{L}}
\newcommand{\Laurent}{\cL_{-1}}
\newcommand{\eps}{\varepsilon}
\newtheorem{assumption}{Assumption}
\newtheorem{assumptionprime}{Assumption}
\newtheorem{theorem}{Theorem}
\newtheorem{corollary}[theorem]{Corollary}
\newtheorem{lemma}[theorem]{Lemma}
\newtheorem{proposition}[theorem]{Proposition}
\theoremstyle{remark}
\newtheorem{remark}[theorem]{Remark}
\newcommand{\actions}{\mathcal{A}}
\newcommand{\timedelays}{\mathcal{T}}
\newcommand{\lc}{c}
\newcommand{\LC}{C}
\definecolor{tablegray}{gray}{0.94}
\definecolor{headergray}{gray}{0.82}
  \providecommand\BibTeX{{%
    \normalfont B\kern-0.5em{\scshape i\kern-0.25em b}\kern-0.8em\TeX}}}
\begin{document}
\nocite{Koh80}
\newcommand\myshade{85}
\definecolor{hreflinkcolor}{HTML}{3f87d3}
\definecolor{hrefcitecolor}{HTML}{ee7c27}
\definecolor{hrefurlcolor}{rgb}{1,0.5,0}
\hypersetup{
  linkcolor  = hreflinkcolor!85!black,
  citecolor  = hrefcitecolor!85!black,
  urlcolor   = hrefurlcolor!85!black,
  colorlinks = true,
}

\title{Stationary regimes of piecewise linear dynamical systems with priorities}

\author{Xavier Allamigeon}%
 \affiliation{%
   \institution{Inria Saclay \\ CMAP, \'Ecole polytechnique, IP Paris}
   \country{France}
 }
 \email{xavier.allamigeon@inria.fr}
 \author{Pascal Capetillo}%
 \affiliation{%
   \institution{Inria Saclay \\ CMAP, \'Ecole polytechnique, IP Paris}
   \country{France}
 }
 \email{pascal.capetillo@inria.fr}

 \author{St\'ephane Gaubert}%
 \affiliation{%
   \institution{Inria Saclay \\ CMAP, \'Ecole polytechnique, IP Paris}
   \country{France}
 }
 \email{stephane.gaubert@inria.fr}

\renewcommand{\shortauthors}{Allamigeon, Capetillo and Gaubert}
\begin{abstract}
Dynamical systems governed by priority rules appear in the modeling of emergency organizations and road traffic. These systems can be modeled by piecewise linear time-delay dynamics, specifically using Petri nets with priority rules. A central question is to show the existence of stationary regimes (\ie, steady state solutions)---taking the form of invariant half-lines---from which essential performance indicators like the throughput and congestion phases can be derived. Our primary result proves the existence of stationary solutions under structural conditions involving the spectrum of the linear parts within the piecewise linear dynamics. This extends to a broader class of systems a fundamental theorem of Kohlberg (1980) dealing with nonexpansive dynamics. The proof of our result relies on topological degree theory and the notion of ``Blackwell optimality'' from the theory of Markov decision processes. Finally, we validate our findings by demonstrating that these structural conditions hold for a wide range of dynamics, especially those stemming from Petri nets with priority rules. This is illustrated on real-world examples from road traffic management and emergency call center operations.
\end{abstract}

\keywords{Piecewise linear dynamics, Emergency organizations, Markov decision processes, Blackwell optimality, Topological degree theory}

\maketitle

\section{Introduction}

\paragraph{Context}
A series of works, based on the ``max-plus approach'' to discrete event systems and Petri nets~\cite{baccelli1992sync,HOW:05}, have established how systems with synchronization and concurrency phenomena can be represented by piecewise linear dynamics.
The theory has been thoroughly developed for {\em monotone\/} (\ie, order-preserving) piecewise linear dynamics. Under natural assumptions, the existence of stationary solutions (\ie, steady state solutions) has been established~\cite{CGQ95b,baccelli1996free,gaujal2004optimal} and it has been shown that all trajectories remain within a bounded neighborhood of a stationary solution, see~\cite{boyet2021}. This provides a well-defined notion of the throughput---a fundamental tool in the analysis of these systems.

In contrast, the case of {\em nonmonotone\/} piecewise linear dynamics is more challenging. Nonmonotonicity naturally appears in systems with {\em priority rules}, as seen in applications to road traffic~\cite{Farhi2011} and emergency call centers~\cite{emergency15,boyet2021,boyet2022}. In these cases, stationary solutions have been derived on a case-by-case basis, for instance to compute {\em congestion phases\/}---which combination of events and resources that limit processing speeds---in terms of the resource allocation~\cite{emergency15,boeuf2017dynamics,boyet2022}.

To date, there is no general result that ensures the existence of stationary solutions for piecewise linear systems beyond the monotone case, which was posed as an open question in~\cite{maxplusblondel}. This raises an intriguing problem which we address in this paper: proving, {\em a priori}, the existence of stationary solutions of nonmonotone piecewise linear dynamics.  

\paragraph{Setting} We consider piecewise linear dynamical systems with time delays, whose trajectories are vector-valued functions of time, $x(t)= (x_i(t))_{i\in[n]} \in \R^n$ satisfying the following conditions:
\begin{align}\label{eq:tds}
  x_i(t) = \min_{a\in \actions_i} \Bigg( r_i^a + \sum_{\tau\in \timedelays} \sum_{j\in [n]} {(P_\tau^a)}_{ij} x_j(t-\tau ) \Bigg), \quad i\in [n], \; t\geq 0 \,. \tag{\(D\)} 
\end{align}
Here, $\timedelays$ is a finite subset of $\N \coloneqq \{0,1,2,\dots\}$, \(\actions_i\) is a finite set for each \(i \in [n]\), and \(r_i^a\) as well as \((P^a_\tau)_{ij}\) are real numbers (of any sign), for all $a \in \actions_i$ and $j \in [n]$. In the context of Petri net models dealt with in~\cite{baccelli1992sync,HOW:05,CGQ95b,gaujal2004optimal,Farhi2011,emergency15,boyet2021,boyet2022}, the functions $x_i(\cdot)$ are known as {\em counter functions}, \ie, $x_i(t)$ is the number of events of type $i\in [n]$ that have occurred up to time $t$. The system is said to be {\em monotone\/} when the trajectory $x(\cdot)$ is a nondecreasing function of the initial condition $(x(s))_{-\bar{\tau}\leq s\leq 0}$ where $\bar{\tau}\coloneqq \max \timedelays$.
Monotonicity holds when all the \((P^a_\tau)_{ij}\) are nonnegative. Note that in the cases where $\timedelays$ is reduced to $\{1\}$ (\ie, $\tau \equiv 1$) and, for every $i$ and $a \in \actions_i$, the sum of the \((P^a_\tau)_{ij}\) for $j \in [n]$ is equal to $1$, \eqref{eq:tds} encompasses the dynamics of Markov decision processes (MDPs)~\cite{puterman}. More generally, when the $(P^a_\tau)_{ij}$ are nonnegative and $\sum_{\tau \in\timedelays} \sum_{j\in[n]} (P^a_\tau)_{ij}=1$ for all $i$ and $a \in \actions_i$, ~\eqref{eq:tds} coincides with the dynamic programming equation of a {\em semi-Markov\/} decision process~\cite[Chap.~11]{puterman};~\cite{CGQ95b} and~\cite[Chap. 4]{boyet2022}. In contrast, systems with negative values for \((P^a_\tau)_{ij}\) emerge naturally when studying Petri nets with priorities. An example requiring this general setting is recalled in~\Cref{sec:motivation}.

\paragraph{Contribution}

In this paper, we establish structural conditions under which the existence of stationary solutions of nonmonotone piecewise dynamical systems of the form~\eqref{eq:tds} is guaranteed (\Cref{cor:main}). The stationary solutions take the form of an invariant half-line $x(t) = u + \rho t$, with $u,\rho\in\R^n$, where $\rho$ corresponds to the throughput of system. More specifically, the structural conditions are expressed in terms of spectral properties of the linear components of the dynamics, \ie, the $(P^a_\tau)_{ij}$. Remarkably, these conditions are independent of the parameters $r_i^a$. This is of interest, for instance, when studying system performance (\eg the throughput) as the parameters vary. In practical applications, the parameters~$r_i^a$ may correspond to resources such as staff allocation: a key consideration for decision makers in optimizing such organizations, see~\cite{emergency15,boyet2021}. 

Additionally, we show that all but one of these spectral conditions are automatically fulfilled by Petri nets with priority rules satisfying a stoichiometry assumption (a kind of conservation law which is always satisfied in our applications).

We demonstrate the applicability of our results by recovering the existence of stationary solutions for a range of examples of Petri nets arising from real-world applications in road traffic and emergency call centers.

The proof of \Cref{cor:main} employs techniques from topological degree theory and discounted MDPs. Indeed, we introduce an auxiliary parameter $\alpha\in [0,1)$, which may be thought of as a discount factor, and use degree theory (i.e., homotopy arguments) to establish the existence of a fixed point to the discounted problem for all values of $\alpha$. Unlike in standard MDPs, the resulting fixed-point mapping in this discounted problem is generally non-contracting. However, we demonstrate that the typical contraction condition can be replaced with a semisimplicity condition on the spectrum of specific matrices. Finally, we derive the existence of a steady state from the existence of an appropriate Laurent series expansion of the discounted solution.

In addition, we show in \Cref{thm:maxmin} and \Cref{cor:kohl} that our result extends (relaxing the nonexpansiveness condition) a fundamental theorem due to Kohlberg~\cite{Koh80}, showing the existence of an invariant half-line for piecewise linear nonexpansive dynamics with unit delays.

\paragraph{Related work}
As mentioned above, the open problem of the existence of stationary solutions of piecewise linear dynamics was originally stated by Plus in~\cite{maxplusblondel}. The existence of such stationary solutions has been established in specific applications, such as by Farhi, Goursat, and Quadrat for road networks with priorities~\cite{Farhi2011,nadir,farhi2019dynamicprogrammingsystemsmodeling} and by Allamigeon, B\oe uf, Boyet, and Gaubert for emergency call centers~\cite{emergency15,boeuf2017dynamics,boyet2021,boyet2022}. This was carried out by an explicit enumeration (done by hand) of the stationary solutions of the (exponentially many) linear parts of the dynamics. The conditions under which such a stationary solution applies to the whole dynamical system writes as linear inequality constraints over the parameters $r^a_i$. Such constraints were then solved, again, by hand. This approach can be automatized but remains limited to small systems. In contrast, our main result is motivated by identifying broad classes of dynamical systems, encompassing the mentioned examples, for which the existence of stationary solutions is guaranteed {\em a priori}, especially independently of the parameters $r^a_i$.

The present results are also inspired by the recent work of Radons and Tonneli-Cueto~\cite{Radons2023}, showing the existence of solutions of the ``absolute value equation'' arising in the solution of linear complementarity problems, using topological degree theory methods. Their results apply to {\em fixed points\/} of a class of piecewise linear maps, that are {\em positively homogeneous\/} of degree $1$. In contrast, our main results concern the more delicate notion of invariant half-lines, and dynamics that are not positively homogeneous. Moreover, an essential new ingredient of our approach is the reduction to the discounted problem, extending the notion of {\em Blackwell optimality\/} from the theory of MDPs~\cite{puterman} to the case of maps that are no longer nonexpansive and order-preserving. 
Hence, the present results provide a partial extension of stochastic dynamic programming to the case of ``negative probabilities'' --- a concept originally considered by Feynman~\cite{feynman}. 

\paragraph{Outline} In \S\ref{sec:motivation} we present the framework giving rise to the dynamics~\eqref{eq:tds}, namely that of Petri nets with priority and preselection rules. We also introduce a running example of these dynamics which arises from a simplified model of an emergency call center operating in the Paris area. %
In \S\ref{sec:main}, we formulate the spectral conditions on the linear parts of the dynamics, namely Assumptions~\ref{as:1} and~\ref{as:2}, and we prove our main result, \Cref{cor:main}, on the existence of stationary solutions. An extension of this result to more general piecewise linear dynamics involving both ``min'' and ``max'' is carried out in \S\ref{sec:kohl}, which allows us to recover Kohlberg's theorem as a special case. The link back to Petri nets is made in \S\ref{sec:automaticPetri} by showing that all but one of the assumptions presented in \S\ref{sec:main} are automatically fulfilled by the broad class of dynamics introduced in \S\ref{sec:motivation}. Finally in \S\ref{sec:catalog}, we apply our main results to a set of examples from real-world applications.

\section{Motivation}\label{sec:motivation}

\subsection{Petri nets with Priority Rules}

Our approach relies on models of timed Petri nets with priority considerations, which we next present, referring the reader to~\cite{Farhi2011,emergency15,boyet2021,boyet2022} for a complete background.

A {\em Petri net\/} is a bipartite graph consisting of places, \(\places\), and transitions, \(\transitions\), together with a vector of markings \(m \in \N^{\places}\) associating a natural number \(m_p\) of {\em tokens} to each place \(p \in \places\). Its dynamics builds on transition {\em firings}. Firing a transition consumes one token in each upstream place and creates a new token in each downstream place. A transition which has available tokens in all its upstream places is called {\em fireable}.
We enrich the notion of a Petri net by introducing a collection \(\timedelays = (\tau_p)_p \in \N^\places\)
of {\em holding times}: for each place \(p\), $\tau_p$ represents the amount of time a token must spend in this place before becoming available for firing. Once a token has spent \(\tau_p\) time in place \(p\) it is called {\em mature}. 

We also allow for {\em input transitions}, transitions with no upstream place and externally prescribed firings producing tokens in their downstream places. This puts us in the framework of {\em controlled\/} Petri nets.

Determining the order in which transitions fire is not always straight-forward as there may exist {\em structural conflict}, the case of a place having several downstream transitions, in the system.
In this case some routing policy must be adopted to determine which fireable transition to allocate a mature token to. In order to model the complex concurrency phenomena present in emergency systems, road traffic, and further situations, two such conflict resolution structures are presented: {\em preselection rules\/} and {\em priority rules}.

A preselection rule is defined by a discrete probability distribution \(\pi\) over the downstream transitions of a place \(p\). Adding the constraint that each such downstream transition \(q\) can have only one upstream place, this resolves the structural conflict by allocating any mature token in \(p\) to \(q\) with probability \(\pi_q^p\).

A priority rule associated to a place \(p\) is defined by a total order \(\prec_p\) on the downstream transitions \(q\). In this case, the structural conflict is resolved by prioritizing the mature tokens to go to the transition with highest priority as long as it is fireable. Specifically, if two downstream transitions are fireable, the token is used by the one with higher priority. One further assumption that is placed on the priority rules is that they are {\em consistent\/} meaning that there is a linear extension of the total orders to all transitions that is acyclic as a topological ordering. In other words, there exists some (not necessarily unique) total ordering of all transitions that is consistent with the priority rules. This becomes a way to resolve structural conflict for transitions with several upstream places.

To express the dynamics of the system, we rely on {\em counter functions}. We associate to every transition a nondecreasing function counting the number of times it has been fired up to (and including) a given time.
In general, the dynamics of a Petri net can be described by a set of equations
describing the evolution of \(z_q(t) \colon \R \to \N\), where only a subset of transitions \(q\) are considered. In order to model priority rules a ``smallest time-delay'' \(\eps\) must be added to the set of time-delays \(\timedelays\) to allow counting how many times a transition has been fired until and not including a certain time \(t\). In exact models of Petri nets, counter functions are integer valued. A continuous relaxation is considered in~\cite{Farhi2011,emergency15,boyet2021,boyet2022}, allowing ``infinitesimal'' firings, leading to counters that are real valued. Note that, in this setting, pre-selection rules define what fraction of a mature token goes to what downstream transition. It is shown in~\cite[Chap. 2, Sec. 3]{boyet2022} that, at least for a subclass of Petri nets, the continuous relation is asymptotically tight when a scaling limit is considered (for instance, it is accurate for a large scale call center, with many operators). Then, after a continuous relaxation, the counter equations of a timed Petri net with priorities satisfy a system of the form \eqref{eq:tds}. We refer to~\cite{emergency15,boyet2022} for the detailed derivation of the equations~\eqref{eq:tds}  for general Petri nets, and to \S\ref{sec:automaticPetri} below for details.

\subsection{Paris Emergency Call Center as a Running Example}

We shall consider the model of an emergency call center
developed in~\cite{emergency15}: PFAU ({\em Plateforme d'appels  d'urgence\/}) is the call center handling calls to the firefighters and policemen (emergency numbers 17-18-112) in the Paris area.

We assume a constant rate of incoming calls, denoted as \(\lambda\). All calls are initially answered by a first level responder when available and are subsequently sorted into three degrees of severity: non-urgent, urgent, and very urgent (potentially life threatening situation), based on a preliminary assessment made by the responder. This is modeled with a preselection rule where the probability distribution assigns probabilities (or relative rates) to each of the different severities occurring in an incoming call. Non-urgent callers are fully handled by the first level responder. Urgent callers require assistance from a second level operator and are transferred to a waiting room until a second level responder is available. Very urgent callers require a three-way briefing of the second level (specialized) responder by the first level responder, to ensure the caller is constantly monitored and essential information is properly transferred. In particular, such very urgent calls wait for an available second level responder while staying in line with a first level responder. Once the three-way conversation concludes, the first level responder returns to the pool while the rest of the call is handled exclusively by the second level responder. Second level responders prioritize very urgent calls over urgent calls, which is modeled with a priority rule. In the beginning there are \(N_A\) first level responders and \(N_P\) second level responders available. A working example of a Petri net with preselection and priority rules modeling this situation, taken from~\cite{emergency15}, is presented in~\Cref{fig:bilevel}. Note the convention of notation: the double arrow upstream transition $z'_3$, representing the start of a three way conversation, indicates that this task has priority over transition $z'_3$ (handling of a urgent call) -- with a single upstream arrow.

\begin{figure}[h]
\centering
\def\tkzscl{.27}
\vspace{-.2cm}

\definecolor{colorARM}{rgb}{0,0,1}
\definecolor{colorARMres}{rgb}{0.92,0.5,0.11}
\definecolor{colorAMU}{rgb}{1,0,0}
\definecolor{colorexit}{rgb}{0.4,0.4,0.4}

\tikzset{place/.style={draw,circle,inner sep=2.5pt,semithick}}
\tikzset{transition/.style={rectangle, thick,fill=black, minimum width=2mm,inner ysep=0.5pt, minimum width=2mm}}
\tikzset{jeton/.style={draw,circle,fill=black!80,inner sep=.35pt}}
\tikzset{pre/.style={=stealth'}}
\tikzset{post/.style={->,shorten >=1pt,>=stealth'}}
\tikzset{-|/.style={to path={-| (\tikztotarget)}}, |-/.style={to path={|- (\tikztotarget)}}}
\tikzset{Farhi/.style 2 args={dashed,dash pattern=on 1pt off 1pt,#1, postaction={draw,dashed,dash pattern=on 1pt off 1pt,#2,dash phase=1pt}}}
\tikzset{arrowPetri/.style={>=latex,rounded corners=5pt,semithick}}

\begin{tikzpicture}[scale=\tkzscl,font=\scriptsize]

\def\p{2.2}

\begin{scope}[shift={(0,4)}]
\node[place] (pool_first_level) at ($(-2*\p,-1.5*\p)$) {};
\node (txt_Na) at ($(pool_first_level)+(-1.3,0)$) {$N_A$};

\node[transition]    (q_arrivals)                          at    (0, 0) {};
\node[place]         (p_inc_calls)                         at    ($(q_arrivals) + (0, \p)$)  {};
\node[transition]    (q_inc_calls)                         at    ($(p_inc_calls) + (0, \p)$)  {};
\node[place]         (p_arrivals)       at    ($(q_arrivals) + (0,-\p)$) {};
\node[transition]    (q_begin_NU)       at    ($(p_arrivals) + (0,-\p)$) {};
\node[transition]    (q_begin_U)        at    ($(p_arrivals) + (2*\p,-\p)$) {};
\node[transition]    (q_begin_VU)       at    ($(p_arrivals) + (4*\p,-\p)$) {};

\draw[->,arrowPetri,colorARM] (p_arrivals)  |- ($(p_arrivals)+(.5*\p,-.5*\p)$) -| (q_begin_U);
\draw[->,arrowPetri,colorARM] (q_begin_NU) |- ($(q_begin_NU)+(0,-.5*\p)$) -| (pool_first_level);
\draw[->,arrowPetri,colorARM] (q_begin_U) |- ($(q_begin_NU)+(0,-.5*\p)$) -| (pool_first_level);
\draw[->,arrowPetri,colorARM] (p_arrivals)  -- (q_begin_NU);
\draw[->,arrowPetri,colorARM] (q_arrivals) -- (p_arrivals);
\draw[->,arrowPetri,colorARM] (p_arrivals)  |- ($(p_arrivals)+(.5*\p,-.5*\p)$) -| (q_begin_VU);

\node[place]         (p_U)           at    ($(q_begin_U)  + (0,-\p)$) {};
\node[transition]    (q_U)         at    ($(p_U)  + (0,-\p)$) {};
\node[place]         (p_VU1)           at    ($(q_begin_VU)  + (0,-\p)$) {};
\node[transition]    (q_VU1)         at    ($(p_VU1)  + (0,-\p)$) {};
\node[place]         (p_VU2)           at    ($(q_VU1)  + (0,-\p)$) {};
\node[transition]    (q_VU2)         at    ($(p_VU2)  + (0,-\p)$) {};

\draw[arrowPetri]    (q_inc_calls) |- ($(q_inc_calls)+(-.35*\p,.25*\p)$);
\draw[dashed, arrowPetri]    (q_inc_calls) |- ($(q_inc_calls)+(-1.5*\p,.25*\p)$);
\draw[->,arrowPetri] (q_inc_calls) -- (p_inc_calls);
\draw[->,arrowPetri] (p_inc_calls) -- (q_arrivals);
\draw[->,arrowPetri,colorARM] (q_VU2) -- ($(q_VU2)+(0,-.5*\p)$)  -|  (pool_first_level);

\node[place]    (p_consult_AMU)    at    ($(q_VU2)+(-\p,-1.5*\p)$) {};

\draw[->,arrowPetri]    (q_begin_U) -- (p_U);
\draw[->,arrowPetri]    (p_U) -- (q_U);
\draw[->,arrowPetri,colorARM]    (q_begin_VU) -- (p_VU1);
\draw[->,arrowPetri,colorARM]    (p_VU1) -- (q_VU1);
\draw[->,arrowPetri,Farhi={colorARM}{colorAMU}]    (q_VU1) -- (p_VU2);
\draw[->,arrowPetri,Farhi={colorARM}{colorAMU}]    (p_VU2) -- (q_VU2);

\draw[->,arrowPetri,colorARM]                           (pool_first_level)    |- ($(q_arrivals)+(-.5*\p,.5*\p)$) -- (q_arrivals);

\node (txt_tau2) at ($(p_VU2)+(.5*\p,0)$) {$\tau_2$};
\node (txt_tau1) at ($(p_arrivals.center)+(.5*\p,0)$) {${\tau}_1$};
\node (txt_tau3) at ($(p_consult_AMU.center)+(.5*\p,0)$) {${\tau}_3$};
\node (txt_piNU) at ($(q_begin_NU.west)$) [left] {$\pi_{\textrm{NU}}$};
\node (txt_piU) at ($(q_begin_U.west)$) [left] {$\pi_{\textrm{U}}$};
\node (txt_piVU) at ($(q_begin_VU.west)$) [left] {$\pi_{\textrm{VU}}$};

\node[place]      (pool_second_level)    at    ($(q_VU2)+(2*\p,.5*\p)$) {};
\node (txt_Nm) at ($(pool_second_level)+(-1.3,0)$) {$N_P$};

\node[transition] (q_end_consult_AMU)                  at    ($(p_consult_AMU)+(0,-\p)$) {};
\draw[->,arrowPetri,colorAMU]    (q_U) |-  ($(p_consult_AMU) + (-0.5*\p, 0.5*\p)$)     -- (p_consult_AMU);
\draw[->,arrowPetri,colorAMU]    (q_VU2) |-  ($(p_consult_AMU) + (0.5*\p, 0.5*\p)$)     -- (p_consult_AMU);
\draw[->,arrowPetri,colorAMU]    (p_consult_AMU)     -- (q_end_consult_AMU);
\draw[->,arrowPetri,colorAMU]    (q_end_consult_AMU) -- ($(q_end_consult_AMU)+(0,-1)$) -| (pool_second_level);
\draw[->,arrowPetri,colorAMU]    (pool_second_level)      |- ($(q_U)+(.5*\p,.5*\p)$)      -- (q_U);
\draw[->>,arrowPetri,colorAMU]    (pool_second_level)      |- ($(q_VU1)+(.5*\p,.5*\p)$)      -- (q_VU1);

\node (txt_z0) at ($(q_inc_calls)+(1*\p,0)$) {$z_0 = \lambda t$};
\node (txt_z1) at ($(q_arrivals)+(.5*\p,0)$) {$z_1$};
\node (txt_z2) at ($(q_begin_NU)+(.5*\p,0)$) {$z_2$};
\node (txt_z2p) at ($(q_begin_U)+(.5*\p,0)$) {$z_2^\prime$};
\node (txt_z2pp) at ($(q_begin_VU)+(.5*\p,0)$) {$z_2^{\prime\prime}$};
\node (txt_z3) at ($(q_U)+(.5*\p,0)$) {$z_3$};
\node (txt_z3p) at ($(q_VU1)+(.5*\p,0)$) {$z_3^\prime$};
\node (txt_z4) at ($(q_VU2)+(.5*\p,0)$) {$z_4$};
\node (txt_z5) at ($(q_end_consult_AMU)+(.5*\p,0)$) {$z_5$};
\end{scope}
\end{tikzpicture}	
\caption{Simplified model of the ``17-18-112 PFAU'' emergency call center~\cite{emergency15}}
\vspace{-.3cm}
\label{fig:bilevel}
\end{figure}

As shown in~\cite{emergency15}, the dynamics of this system can, after elimination of trivial equations, be expressed in the essential variables $z_1,z_3,z'_3$:
\begin{subequations}
  \begin{align}
  z_1(t) & =  \min\left\{ z_0(t), N_{A} + (1-\pi_{\textrm{VU}})z_1(t-\tau_1) + z_3^\prime(t-\tau_2)\right\}\\
  z_3(t) & =  \min\left\{ \pi_{\textrm{U}}z_1(t-\tau_1), \right. \nonumber\\
         &   \phantom{\min\{} \left. N_{P} + z_3(t-\tau_2) + z_3^\prime(t-\tau_2-\tau_3) - z_3^\prime(t)\right\}\\
  z_3^\prime(t) & =  \min\left\{ \pi_{\textrm{VU}}z_1(t-\tau_1), \right. \nonumber\\
               &    \phantom{\min\{} \left. N_{P} + z_3(t-\tau_2) + z_3^\prime(t-\tau_2-\tau_3) - z_3(t^-)\right\}
  \end{align}
  \label{eq:bilevel_simple}
\end{subequations}
It was shown in~\cite{emergency15} that for all amounts of initial resources, $N_A,N_P$, there is a steady state solution. The throughput vector, \((\rho_1, \rho_3, \rho_3^\prime)\), is given by

\begin{align*}
  \rho_1 &= \min \left\{ \lambda, \frac{N_A}{\tau_1 + \pi_{\textrm{VU}} \tau_2}, \frac{N_P}{\pi_{\textrm{VU}}(\tau_2 + \tau_3)} \right\}, \\
  \rho_3 &= 
  \begin{cases}
    \pi_{\textrm{U}} \rho_1, & \text{if } N_P \geq r \min \left(N_A, \lambda(\tau_1 + \pi_{\textrm{VU}} \tau_2) \right) \\[10pt]
    \frac{N_P - \pi_{\textrm{U}} \rho_1 (\tau_2 + \tau_3)}{\tau_2}, & \text{if } r' \min \left(N_A, \lambda(\tau_1 + \pi_{\textrm{VU}} \tau_2)\right) \leq N_P \\
    & \quad \leq r \min \left(N_A, \lambda(\tau_1 + \pi_{\textrm{VU}} \tau_2)\right), \\[10pt]
    0, & \text{if } N_P \leq r' \min \left(N_A, \lambda(\tau_1 + \pi_{\textrm{VU}} \tau_2)\right)
  \end{cases} \\
  \rho_3' &= \pi_{\textrm{VU}} \rho_1
\end{align*}
\begin{align*}
    \text{where} \quad r &= \frac{\pi_{\textrm{VU}}\tau_2 + (\pi_{\textrm{U}} + \pi_{\textrm{VU}})\tau_3}{\tau_1 + \pi_{\textrm{VU}}\tau_2}, \\
    \text{and} \quad r^\prime &= \frac{\pi_{\textrm{VU}}(\tau_2 + \tau_3)}{\tau_1 + \pi_{\textrm{VU}}\tau_2}.
\end{align*}

These solutions were computed in~\cite{emergency15} by reduction to a fixed point problem in a
space of germs of affine functions, lexicographically ordered. This involves a subdivision of the space of resources as a polyhedral complex, and checking whether every cell of this complex leads to a feasible steady state. In this paper, we provide {\em a priori\/} conditions to avoid these steps for establishing existence.

\section{Sufficient conditions for the existence of invariant half-lines}\label{sec:main}

We seek stationary solutions of the dynamics~\eqref{eq:tds} under the form of \emph{invariant half-lines}, \ie, functions of the form 
\begin{equation*}
x(t) = u + \rho (t + t_1) \quad (u, \rho \in \R^n \, , \; t_1 \geq 0) \,,
\end{equation*}
that satisfy
\begin{equation*}
u_i + \rho_i (t + t_1) = \min_{a \in \actions_i} \biggl( r_i^a + \sum_{\tau \in \timedelays} \sum_{j \in [n]}(P_{\tau}^a)_{ij} (u_j + \rho_j (t+t_1-\tau)) \biggr) \,,
\end{equation*}
for all $t \geq 0$ and $i \in [n]$.

To the time-delay system \eqref{eq:tds} we associate a parametric family of {\em discounted problems}, which, for each \(\alpha\in [0,\infty)\), consists in finding \(v(\alpha)\in \R^n\) such that
\begin{equation}\label{eq:discounted}
  v_i(\alpha) = \min_{a\in \actions_i} 
  \big( r_i^a + \sum_{\tau \in \timedelays} \sum_{j\in [n]}{(P_\tau^a)}_{ij} \alpha^{\tau} v_j(\alpha) \big)
  \quad i\in [n] \tag{\(D_\alpha\)}\,.
\end{equation}
We use the term {\em discount factor\/} for the parameter $\alpha$, by extension of the classical situation of MDPs. Recall that, in the latter case,
we have $0\leq \alpha<1$, and $v_i(\alpha)$ yields the value of the $\alpha$-discounted MDP with initial state $i$, see~\cite{puterman}. More generally, when $\timedelays$ is a subset of $\N$ and is not reduced to $\{1\}$, $v_i(\alpha)$ is the value of a discounted semi-Markov decision process~\cite[Chap. 11]{puterman}.
    
A {\em policy\/} \(\sigma \colon [n]\to \cup_{i\in [n]} \actions_i\) is a map such that \(\sigma(i)\in \actions_i\) for all \(i\). It is a way of encoding which minimum is chosen in each component \(i\) in the piecewise linear system. To every policy \(\sigma\) we associate a vector \(r^\sigma \in \R^n\), and for each \(\tau \in \timedelays\) a matrix \({P_\tau^\sigma \in \R^{n \times n}}\) such that
\begin{equation*}
  r_i^\sigma\coloneqq {(r^{\sigma(i)})}_i,  \qquad {(P_{\tau}^{\sigma})}_{ij} \coloneqq {(P_{\tau}^{\sigma(i)})}_{ij}\,.
\end{equation*}
Specifying a policy \(\sigma\) allows us to define the {\em matrix polynomial}
\[
P^\sigma(\alpha) \coloneqq \sum_\tau P_\tau^\sigma \alpha^\tau \in \R^{n\times n}[\alpha]
\enspace ,
\]
which can be thought of either as a formal polynomial in an {\em indeterminate} $\alpha$, with matrix valued
coefficients, or if one prefers, as a matrix-valued polynomial function of the
{\em variable} $\alpha$.
We denote by \({[P^a(\alpha)]}_i\) the \(i\)-th row vector of such a matrix polynomial
where \(\sigma(i) = a\). With this notation, we can write the discounted equations~\eqref{eq:discounted} as
\begin{align*}
  v_i(\alpha) = \min_{a \in \actions_i} \biggl( r_i^a + {[P^a(\alpha)]}_i v(\alpha) \biggr) \quad  \forall i \in [n]\,.
\end{align*}

We denote by \( \Laurent \) the set of real rational functions of \( \alpha \) with a pole of order at most \(1\) at point \(\alpha=1\). Such a function has a Laurent series expansion of the form
\begin{equation*}
f(\alpha) = \frac{\lc_{-1}}{1-\alpha} + \lc_0 + (1-\alpha) \lc_1 + \dots
\end{equation*}
with \( \lc_{-1},\lc_0, \lc_1, \ldots \in \R \), which is absolutely converging in a complex punctured disk centered at point $1$. (By {\em punctured}, we mean
that the center of the disk is excluded.)
Our proof strategy is to show that for all $i\in[n]$, $v_i(\alpha)$ belongs to $\Laurent$,
and to derive the existence of a steady state from this fact. This is inspired
by the special case of MDPs, where the fact that $v_i(\cdot)$ belongs
to $\Laurent$ is a key ingredient of the proof of existence of an invariant
half-line, see~\cite{dynkin1979controlled} and~\cite{Koh80}.

\subsection{The One Policy Case}

We first consider the situation in which \(\lvert \actions_i \rvert =1\)
for all \(i\in [n]\), so that the minimum in~\eqref{eq:tds} becomes trivial and there is only one policy.
Since $\actions_i$ has only a single element, we omit the index of actions
``$a$'', writing for instance $r_i$ instead of $r_i^a$.

We start by recalling a classical fact about resolvents. Recall that an eigenvalue
is {\em semisimple} if its algebraic multiplicity coincides with its geometric multiplicity.
\begin{proposition}[{\cite[p.~40]{kato}}]\label{prop:kato}
  Let \(M\in \R^{n\times n}\).
  Suppose that either \(1\) is not an eigenvalue of \(M\) or that it is a semisimple
  eigenvalue of \(M\). Then, there exists a sequence
  of matrices \(\LC_{-1},\LC_0,\LC_1,\dots\in \R^{n\times n}\) such that
  \[
  {(I -\alpha M)}^{-1}  = \frac{\LC_{-1}}{1-\alpha} + \LC_0  + (1-\alpha)\LC_1 + \dots
  \]
  the expansion being absolutely convergent in a sufficiently small
  punctured disk of center \(1\).
\end{proposition}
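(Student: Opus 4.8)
The plan is to reduce everything to the spectral decomposition of \(M\) at the point \(1\), separating the singular part from the regular part. First I would introduce the spectral (Riesz) projector \(P\) of \(M\) associated with the eigenvalue \(1\); when \(1\) is not an eigenvalue of \(M\), I simply take \(P=0\). The properties I would use are standard: \(P\) is real, commutes with \(M\), and decomposes \(\R^n = \operatorname{ran} P \oplus \ker P\) into two \(M\)-invariant subspaces, where \(\operatorname{ran} P\) is the generalized eigenspace \(\ker\bigl((M-I)^n\bigr)\) and \(1\) is not an eigenvalue of the restriction of \(M\) to \(\ker P\).

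The crucial point is that the semisimplicity hypothesis forces \(M\) to act as the identity on \(\operatorname{ran} P\). Indeed, semisimplicity means that the algebraic and geometric multiplicities of the eigenvalue \(1\) coincide, i.e.\ the generalized eigenspace \(\ker\bigl((M-I)^n\bigr)\) equals the eigenspace \(\ker(M-I)\); since \(\operatorname{ran} P\) is exactly the former, we get \((M-I)P = 0\), so \(M|_{\operatorname{ran} P} = I\). Using that \(P\) commutes with \(I-\alpha M\), I would then split
\[
(I-\alpha M)^{-1} = (I-\alpha M)^{-1} P + (I-\alpha M)^{-1}(I-P) \,.
\]
On \(\operatorname{ran} P\) the operator \(I-\alpha M\) equals \((1-\alpha)I\), whence the first term is exactly \(\tfrac{1}{1-\alpha}P\), a simple pole with residue \(P\). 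For the second term, the restriction of \(I-\alpha M\) to \(\ker P\) is invertible at \(\alpha=1\) (as \(1\) is not an eigenvalue of \(M\) there), so by continuity of matrix inversion it stays invertible with analytic inverse on a disk \(\lvert \alpha - 1\rvert < \delta\). Hence
\[
(I-\alpha M)^{-1} = \frac{P}{1-\alpha} + g(\alpha), \qquad g(\alpha) \coloneqq (I-\alpha M)^{-1}(I-P) \,,
\]
with \(g\) holomorphic at \(\alpha=1\). Taylor-expanding \(g(\alpha) = \LC_0 + (1-\alpha)\LC_1 + \cdots\) and setting \(\LC_{-1} = P\) yields the claimed expansion, the coefficients being real since \(P\) and \(g\) are real-rational.

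For the convergence statement I would note that \((I-\alpha M)^{-1}\) is a rational matrix function whose only poles lie at the points \(1/\mu\), for nonzero eigenvalues \(\mu\) of \(M\); one may take \(\delta\) to be the distance from \(1\) to the nearest such pole other than \(1\) itself (with \(\delta=\infty\) when there is none). On the punctured disk \(0 < \lvert\alpha-1\rvert < \delta\), the single term \(\tfrac{P}{1-\alpha}\) together with the absolutely convergent Taylor series of \(g\) gives absolute convergence of the whole Laurent series. The one genuinely substantive step is the identity \(M|_{\operatorname{ran} P} = I\): this is precisely where semisimplicity enters, and I expect it to be the only delicate point. Without it, the restriction of \(M\) to \(\operatorname{ran} P\) would be \(I + N\) with \(N\) a nonzero eigennilpotent, and \(I-\alpha M = (1-\alpha)I - \alpha N\) would invert to a term with a pole of order equal to the nilpotency index of \(N\), destroying the order-one bound on the pole.
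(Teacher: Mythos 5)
Your proof is correct: the Riesz-projector decomposition, the observation that semisimplicity forces \((M-I)P=0\) so that \(M\) acts as the identity on \(\operatorname{ran}P\), and the resulting splitting \((I-\alpha M)^{-1}=\frac{P}{1-\alpha}+g(\alpha)\) with \(g\) holomorphic at \(\alpha=1\) is exactly the standard argument behind the result the paper invokes. The paper itself offers no proof---it cites Kato's Laurent expansion of the resolvent at a semisimple eigenvalue (where the eigennilpotent vanishes)---and your argument is that proof, merely transported from Kato's variable \(\zeta=1/\alpha\) to the variable \(\alpha\) near \(1\), so there is nothing to compare beyond noting full agreement.
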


\begin{theorem}\label{thm:semisimple}
  Let \(P \in  {(\R[\alpha])}^{n \times n}\) be a matrix polynomial and \(r \in \R^n\) a vector. Suppose that \(1\) either is not an eigenvalue of the matrix \(P(1)\) or that it is a semisimple
  eigenvalue of \(P(1)\). Let \(\LC_{-1},\LC_0,\dots\) be as in~\Cref{prop:kato}
  with \(M\coloneqq P(1)\).
  Let \[S= \sum_{\tau \in \timedelays} (\tau-1) P_\tau \] and suppose in addition that \(I+\LC_{-1}S\) is invertible. Then, for \(\alpha\) in a sufficiently small punctured disk of center \(1\) the system of equations
  \[v = r +P(\alpha)v\] has a unique solution \(v(\alpha)\). Moreover, the entries of \(v(\alpha)\) are rational functions belonging to \(\mathcal{L}_{-1}\).
\end{theorem}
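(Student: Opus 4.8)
The plan is to solve the linear system explicitly by inverting $I - P(\alpha)$ and to control the behaviour of this inverse near $\alpha = 1$ via the resolvent expansion of \Cref{prop:kato}. Indeed, $v = r + P(\alpha) v$ is equivalent to $(I - P(\alpha)) v = r$, so uniqueness and the $\Laurent$ property both reduce to understanding $(I - P(\alpha))^{-1}$ in a punctured disk around $1$. Since $I - P(\alpha)$ is a matrix polynomial, its inverse (where it exists) has rational entries, so the only real content is the pole order at $\alpha = 1$.

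First I would isolate the ``resolvent part.'' Writing $M = P(1)$, I decompose
\[
I - P(\alpha) = (I - \alpha M) + Q(\alpha), \qquad Q(\alpha) := \alpha M - P(\alpha) = \sum_{\tau \in \timedelays} (\alpha - \alpha^\tau) P_\tau .
\]
A direct computation gives $Q(1) = 0$ and $Q'(1) = \sum_{\tau} (1 - \tau) P_\tau = -S$. Hence every entry of $Q$ is divisible by $(1-\alpha)$ as a polynomial, and one may write $Q(\alpha) = (1-\alpha) R(\alpha)$ for a matrix polynomial $R$ with $R(1) = S$.

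Next I would bring in \Cref{prop:kato}. Under the semisimplicity hypothesis, $G(\alpha) := (I - \alpha M)^{-1}$ is defined on a punctured disk around $1$, and $(1-\alpha) G(\alpha) = \LC_{-1} + (1-\alpha)\LC_0 + \dots$ extends analytically to the full disk with value $\LC_{-1}$ at $\alpha = 1$. For $\alpha \neq 1$ in this punctured disk, using $(I-\alpha M)G(\alpha) = I$ and $Q(\alpha) = (1-\alpha)R(\alpha)$, I factor
\[
I - P(\alpha) = (I - \alpha M)\bigl(I + (1-\alpha) G(\alpha) R(\alpha)\bigr) =: (I - \alpha M)\, H(\alpha).
\]
The crucial observation is that $H(\alpha) = I + \bigl((1-\alpha)G(\alpha)\bigr) R(\alpha)$ is analytic at $\alpha = 1$, with $H(1) = I + \LC_{-1} S$, which is invertible by hypothesis. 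By continuity of the determinant, $H(\alpha)$ then remains invertible with $H(\alpha)^{-1}$ analytic on a possibly smaller disk around $1$. Since $I - \alpha M$ is invertible for all $\alpha \neq 1$ in the punctured disk, so is $I - P(\alpha)$, yielding the unique solution $v(\alpha) = H(\alpha)^{-1} G(\alpha) r$.

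Finally I would read off the pole order. Near $\alpha = 1$, the factor $H(\alpha)^{-1}$ is analytic, while $G(\alpha) r$ has a pole of order at most $1$, its principal part being $\LC_{-1} r/(1-\alpha)$. Their product $v(\alpha)$ therefore has a pole of order at most $1$; combined with the fact that its entries are rational functions of $\alpha$, this gives $v_i(\alpha) \in \Laurent$ for each $i$. I expect the main obstacle to be precisely the factorization and analytic-continuation step: one must recognize that $(1-\alpha)(I-\alpha M)^{-1}$ extends to $\alpha=1$ with limit $\LC_{-1}$, so that the hypothesis ``$I + \LC_{-1} S$ invertible'' is exactly what makes $H(1)$, and hence $H(\alpha)$ near $1$, invertible. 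The Taylor computation of $Q$ and the concluding pole count are routine once this identity is in place.
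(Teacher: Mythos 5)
Your proof is correct and takes essentially the same route as the paper's: your decomposition \(I - P(\alpha) = (I - \alpha M) + (1-\alpha)R(\alpha)\) with \(R(1) = S\) is exactly the rearrangement the paper obtains via the identity \(\alpha^{\tau-1}-1 = (\alpha-1)(1+\cdots+\alpha^{\tau-2})\), and your multiplicative factorization through \(H(\alpha)\) with \(H(1) = I + \LC_{-1}S\) is the same step the paper performs by multiplying the rearranged equation by \((I-\alpha P(1))^{-1}\) and invoking the invertibility of \(I + \LC_{-1}S\). The only difference is presentational: you derive the \((1-\alpha)\)-divisibility cleanly from \(Q(1)=0\), \(Q'(1)=-S\) rather than by the paper's explicit telescoping, which is a slightly tidier write-up of the identical argument.
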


\begin{proof}
Suppose that \(v\) satisfies the above system. Then, denoting \(P(0) = P_0\), \(P(\alpha) - P(0) = \alpha \sum_{\tau > 0} P_\tau \alpha^{\tau-1}\) and \(P(1) - P(0) =  \sum_{\tau > 0} P_\tau\) we get

\begin{align*}
  v & = r + \bigg(P(\alpha) - P(0) + \alpha (P(0) - P(1)) + \alpha P(1) + (1-\alpha)P(0)\bigg) v 
  \\
  & = r + \left(\alpha \sum_{\tau > 0}P_\tau \alpha^{\tau-1} - \alpha \sum_{\tau > 0}P_\tau +\alpha P(1) + (1-\alpha) P(0)\right) v 
  \\
  & = r + \left(\alpha\sum_{\tau > 0}P_{\tau}(\alpha^{\tau-1}-1) +\alpha P(1) + (1-\alpha)P(0)\right) v \,.
\end{align*}
Rearranging and noting that \((\alpha^{\tau-1} - 1) = (\alpha - 1)(1 + \cdots + \alpha^{\tau-2})\) we get 
  \begin{align*}
    \Big(I - \alpha P(1)  + (1-\alpha) \Big[\alpha \sum_{\tau > 0} (1 + \cdots  + \alpha^{\tau-2}) P_\tau  - P(0) \Big]\Big)v & = r  \,,
  \end{align*}
  which in turn gives 
  \begin{align*}
    \Big(I - \alpha P(1)  + (1-\alpha) \Big[\alpha \sum_{\tau > 0} (\tau - 1) P_\tau + O(1-\alpha)- P(0)\Big]\Big)v & = r  \,,
  \end{align*}
  where the term \(O(1-\alpha)\) is a matrix of rational functions (here, polynomials) that cancels at \(\alpha = 1\). Remark that \(\alpha \sum_{\tau > 0} (\tau - 1) P_\tau = \sum_{\tau > 0} (\tau - 1) P_\tau + O(1-\alpha)\) for \(\alpha\) close enough to \(1\), and \(\sum_{\tau > 0} (\tau - 1) P_\tau - P(0) = \sum_{\tau \geq 0} (\tau - 1) P_\tau = S\). Thus, the previous system can be written equivalently as 
  \begin{align}\label{eq:semisimple1}
    \Big(I - \alpha P(1)  + (1-\alpha) \Big[S + O(1-\alpha)\Big]\Big)v & = r  \,.   
  \end{align}
  Recall that \(1\) is either not an eigenvalue of \(P(1)\) or is a semisimple eigenvalue of \(P(1)\) so \(I-\alpha P(1)\) is invertible for \(\alpha\) in a punctured disk of center \(1\) with 
  \begin{align*}
    {(I -\alpha P(1))}^{-1}  = \frac{\LC_{-1}}{1-\alpha} + O(1) \enspace ,
  \end{align*}
  where \(O(1)\) is a matrix of rational functions without pole at \(\alpha = 1\). Multiplying~\eqref{eq:semisimple1} by \({(I-\alpha P(1))}^{-1}\) on both sides we get
  \begin{align}\label{eq:semisimple2}
    \Big(I + \LC_{-1}S + O(1 - \alpha)\Big)v & = {(I-\alpha P(1))}^{-1}r  \,. 
  \end{align}
  Since by assumption, \(I + \LC_{-1} S \) is invertible so is the left-hand side matrix of~\eqref{eq:semisimple2} provided that \(\alpha\) is close enough to \(1\). As a consequence for \(\alpha\) close enough to \(1\) the system~\eqref{eq:semisimple2} is equivalent to
  \begin{align*}
  v&= {\Big( I + \LC_{-1} S + O(1-\alpha)  \Big)}^{-1}{(I-\alpha P(1))}^{-1}r
  \\
  & = {\Big( I + \LC_{-1} S + O(1-\alpha)  \Big)}^{-1}(\frac{\LC_{-1}}{1-\alpha} +  O(1))r
  \\
  &=
\Big(\frac{1}{1-\alpha} {(I + \LC_{-1} S)}^{-1} \LC_{-1} + O(1)\Big)r \,.
  \end{align*}
We deduce that \(v\) is uniquely defined for \(\alpha\) in a punctured disk of center \(1\) and that it determines an element of \({(\Laurent)}^n\) which concludes the proof.
  \end{proof}

\subsection{The General Piecewise Linear Case}

Reintroducing the original case where \(\actions_i\) may contain several elements, we define the map \(T_\alpha \colon \R^n \to \R^n\) as
\begin{align}\label{eq:def-T}
  {[T_\alpha(v)]}_i \coloneqq
  \min_{a\in \actions_i} \big(r_i^a + {[P^a(\alpha)]}_i v \big) \quad \forall i\in [n] \,.
  \end{align}
so that the discounted problem can be rewritten as
\begin{equation*} 
   v = T_\alpha(v) \,.
\end{equation*}
Similarly for every policy \(\sigma\), we define the map \(T_\alpha^\sigma \colon \R^n\to \R^n\),
such that
\begin{equation*}
T^\sigma_\alpha(v) \coloneqq
r^\sigma + P^\sigma(\alpha)v \,.
\end{equation*}
We say that a policy $\sigma$ is {\em active\/} if for all $i\in[n]$, the set
\begin{align*}
  \biggl\{ z = (z_\tau)_{\tau \in \mathcal{T}} \in (\mathbb{R}^n)^{\mathcal{T}} \;\Big|\; 
  &\min_{a \in \actions_i} \Bigl( r_i^a + \sum_{\tau \in \mathcal{T}} (P^a_\tau)_i z_\tau \Bigr) \\
  &= r_i^{\sigma(i)} + \sum_{\tau \in \mathcal{T}} (P^{\sigma(i)}_\tau)_i z_\tau \biggr\}
  \end{align*}
is of non-empty interior. The introduction of active policies is motivated by the following selection property.
\begin{lemma}[Selection]\label{lem:sel}
  For all $(z_\tau)_{\tau\in\mathcal{T}}\in (\R^n)^{\mathcal{T}}$, there is an active policy $\sigma$ such that
  \begin{align}
  \min_{a \in \actions_i} \biggl( r_i^a + \sum_{\tau\in\mathcal{T}} (P^a_\tau)_i z_\tau \biggr) = r_i^{\sigma(i)} + \sum_{\tau\in\mathcal{T}} (P^{\sigma(i)}_\tau)_i z_\tau \,.
      \label{eq:sel}
  \end{align}
\end{lemma}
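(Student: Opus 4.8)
The plan is to prove the statement separately for each coordinate $i$, exploiting the fact that the activeness condition decouples across the index: the set appearing in the definition of an active policy constrains only the value $\sigma(i)$ and involves only the $i$-th component of the minimum. Hence it suffices to show, for each fixed $i \in [n]$ and the given $z = (z_\tau)_{\tau \in \mathcal{T}}$, that among the actions $a \in \actions_i$ attaining $\min_{a}\bigl(r_i^a + \sum_{\tau} (P^a_\tau)_i z_\tau\bigr)$ at this particular $z$, there is at least one whose optimality region has non-empty interior. Setting $\sigma(i)$ to be such an action for every $i$ then yields a single policy that is simultaneously active and satisfies \eqref{eq:sel}.

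For fixed $i$, I would introduce for each $a \in \actions_i$ the affine function $g^a(z) := r_i^a + \sum_{\tau \in \mathcal{T}} (P^a_\tau)_i z_\tau$ on $(\R^n)^{\mathcal{T}}$, together with its optimality region $R^a := \{ z \mid g^a(z) \le g^{a'}(z) \text{ for all } a' \in \actions_i \}$. Each $R^a$ is a closed polyhedron, and these regions cover the whole space, $\bigcup_{a \in \actions_i} R^a = (\R^n)^{\mathcal{T}}$. The point is that $R^a$ is exactly the set whose non-empty interior defines activeness of $\sigma$ at component $i$ when $\sigma(i)=a$, while the condition $z \in R^a$ is exactly \eqref{eq:sel} for component $i$ with $\sigma(i) = a$. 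So the task reduces to finding an $a$ with $z \in R^a$ and $\mathrm{int}(R^a) \ne \emptyset$.

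The core is a genericity argument. Any polyhedron with empty interior has proper affine hull, hence is contained in a hyperplane and is closed and nowhere dense; consequently the union $E := \bigcup \{ R^a \mid \mathrm{int}(R^a) = \emptyset \}$ is closed and nowhere dense, so its complement is open and dense (non-empty in particular, since $(\R^n)^{\mathcal{T}}$ is not a finite union of nowhere-dense sets). I would then pick a sequence $z_k \to z$ with $z_k \notin E$. Each $z_k$ lies in some $R^{a_k}$, and since $z_k \notin E$ this region necessarily has non-empty interior. As $\actions_i$ is finite, the pigeonhole principle yields one action $a$ occurring infinitely often; passing to the corresponding subsequence gives $z_k \to z$ with all $z_k \in R^a$ and $\mathrm{int}(R^a) \ne \emptyset$. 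Because $R^a$ is closed, $z \in R^a$, so $a$ attains the minimum at $z$ and has a full-dimensional optimality region, which is precisely the active action sought for component $i$.

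The only delicate point, and the one I expect to be the main obstacle, is the passage from optimality \emph{near} $z$ to optimality \emph{at} $z$: a full-dimensional region need not contain $z$ itself, only points arbitrarily close to it. This is resolved exactly by the closedness of the $R^a$ combined with the pigeonhole extraction, which forces a single full-dimensional region to accumulate at $z$. The remaining ingredients, namely that a polyhedron with empty interior is nowhere dense and that finite unions preserve this property, are routine.
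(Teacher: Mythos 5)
Your proof is correct, and it rests on the same genericity fact as the paper's own proof: a polyhedron with empty interior is contained in a hyperplane, hence nowhere dense, so in a finite cover of $(\R^n)^{\mathcal{T}}$ by closed polyhedra the full-dimensional cells already cover everything. The difference is organizational. The paper works with the \emph{joint} cells $E^\sigma$ (one per policy, consisting of the $z$ for which~\eqref{eq:sel} holds for \emph{all} $i$ simultaneously), notes $(\R^n)^{\mathcal{T}}=\cup_\sigma E^\sigma$, and discards the cells with empty interior: the union of the remaining cells is closed and dense, hence the whole space, and any $\sigma$ with $E^\sigma$ of non-empty interior is automatically active since each per-coordinate region contains $E^\sigma$. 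You instead decouple over coordinates $i$ --- which is legitimate, since activeness is by definition a conjunction of per-coordinate conditions --- and run the density argument on the per-coordinate optimality regions $R^a$. This buys a cleaner statement (activeness is obtained exactly, coordinate by coordinate, with no need to reason about whether intersecting full-dimensional regions over $i$ stays full-dimensional --- it need not, though neither proof requires it), while the paper's version yields slightly more, namely that $z$ lies in a single full-dimensional joint cell $E^\sigma$. One simplification to your argument: the sequence-plus-pigeonhole extraction is unnecessary, since the union of the full-dimensional regions $R^a$ is a finite union of closed sets whose complement is contained in the nowhere dense set $E$; being closed and dense it equals all of $(\R^n)^{\mathcal{T}}$, so $z$ itself lies in some full-dimensional $R^a$ directly --- the same one-line conclusion the paper draws for the cells $E^\sigma$.
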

\begin{proof}
  To every policy $\sigma$, we associate the set $E^\sigma$ consisting of those vectors $(z_\tau)_{\tau\in\mathcal{T}}\in (\R^n)^{\mathcal{T}}$ such that
for
  all $i\in [n]$, 
  \begin{align*}
    \min_{a \in \actions_i} \biggl( r_i^a + \sum_{\tau\in\mathcal{T}} (P^a_\tau)_i z_\tau \biggr) = r_i^{\sigma(i)} + \sum_{\tau\in\mathcal{T}} (P^{\sigma(i)}_\tau)_i z_\tau \,.
    \end{align*}
  By construction, we have $(\R^n)^{\mathcal{T}} = \cup_{\sigma} E^\sigma$ where the union is taken over all policies. Moreover, since every set $E^\sigma$ is a closed
  polyhedron, and since every polyhedron of non-empty interior is the closure
  of its interior, we deduce that we still have
  $(\R^n)^{\mathcal{T}} = \cup_{\sigma} E^\sigma$ where now the union is taken only
  over the set of {\em active\/} policies. This entails that the selection property~\eqref{eq:sel} holds.
  \end{proof}

We shall make use of the following assumption.
\begin{assumption}\label{as:1}
  For each active policy \(\sigma\) we have that
  \begin{enumerate}[(i)]
    \item For all \(0\leq \alpha <1\), \(1\) is not an eigenvalue of \(P^\sigma(\alpha)\);\label{as:1-1}
    \item The matrix \(P^\sigma(0)\) has no eigenvalue in \([1, +\infty)\).\label{it-1}\label{as:1-2}
  \end{enumerate}
\end{assumption}
\begin{assumption}\label{as:2}
  For each active policy \(\sigma\),
  \begin{enumerate}[(i)]
    \item Either \(1\) is not an eigenvalue of \(P^\sigma(1)\) or it is a semisimple eigenvalue of \(P^\sigma(1)\);\label{as:2-1}
    \item The matrix \(I + \LC_{-1}^\sigma S^\sigma\) is invertible, where \(\LC_{-1}^\sigma\) and \(S^\sigma\) are defined as in~\Cref{thm:semisimple}, replacing \(P\) by \(P^\sigma(1)\).\label{as:2-2}
  \end{enumerate}
\end{assumption}
\begin{theorem}\label{thm:gendelays}
Suppose that~\Cref{as:1} holds.
Then, the nonlinear equation \(v=T_\alpha (v)\) has a solution \(v\in \R^n\) for all \(\alpha \in [0,1)\).
\end{theorem}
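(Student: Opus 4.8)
The plan is to recast the equation $v = T_\alpha(v)$ as the search for a zero of the continuous, piecewise affine map $F_\alpha := \mathrm{id} - T_\alpha$, and to invoke topological (Brouwer) degree theory: it suffices to find a radius $R$ such that $F_\alpha$ has no zero on the sphere $\partial B_R$ and $\deg(F_\alpha, B_R, 0) \neq 0$, for then $F_\alpha$ must vanish inside $B_R$. The degree will be computed by deforming $F_\alpha$, through admissible homotopies, first to its recession map and then to the identity, using \Cref{as:1} to keep every homotopy free of zeros on $\partial B_R$.

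The key object is the recession (positively homogeneous) map $g_\alpha$, with $i$-th coordinate $v_i - \min_{a \in \actions_i} [P^a(\alpha)]_i v$, obtained by discarding the constants $r_i^a$. I would first establish that $g_\alpha$ is \emph{nonsingular}, i.e.\ $g_\alpha(v) = 0$ implies $v = 0$, for every $\alpha \in [0,1)$. Given such a $v \neq 0$, I apply \Cref{lem:sel} at the scaled points $z_\tau = \lambda \alpha^\tau v$ and let $\lambda \to \infty$; since there are finitely many policies, a single \emph{active} policy $\sigma$ attains the minimum along a sequence $\lambda \to \infty$, and dividing by $\lambda$ in the limit yields $v = P^\sigma(\alpha) v$. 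The first condition of \Cref{as:1} then gives a contradiction, as $1$ is not an eigenvalue of $P^\sigma(\alpha)$. This scaling-limit device is precisely what lets me use the selection lemma, stated with the constants present, for the homogeneous map.

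With nonsingularity in hand, I would carry out two computations. First, an a priori bound: along the homotopy with $i$-th coordinate $H_s(v)_i := v_i - \min_a\bigl(s\, r_i^a + [P^a(\alpha)]_i v\bigr)$, $s \in [0,1]$, which connects $F_\alpha$ (at $s=1$) to $g_\alpha$ (at $s=0$), the zeros are bounded uniformly in $s$. Indeed, a sequence of zeros $v_k$ with $\|v_k\| \to \infty$ would, after normalising $w_k = v_k/\|v_k\|$ and passing to a limit $w^*$ of unit norm, satisfy $g_\alpha(w^*)=0$ (the rescaled constants vanish), contradicting nonsingularity. This fixes a radius $R$ with no zero of $H_s$ on $\partial B_R$, so homotopy invariance gives $\deg(F_\alpha, B_R, 0) = \deg(g_\alpha, B_R, 0)$. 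Second, I compute the latter: deforming $\alpha$ down to $0$ through $\beta \mapsto g_\beta$ ($\beta \in [0,\alpha]$) is admissible because each $g_\beta$ is nonsingular, so $\deg(g_\alpha, B_R, 0) = \deg(g_0, B_R, 0)$; then the homotopy with $i$-th coordinate $v_i - t\min_a [P^a(0)]_i v$, $t \in [0,1]$, reaches the identity, and any nonzero zero with $t \in (0,1]$ would produce, via the same scaling/selection argument, an active $\sigma$ with $v = t P^\sigma(0) v$, i.e.\ an eigenvalue $1/t \in [1,\infty)$ of $P^\sigma(0)$, contradicting the second condition of \Cref{as:1}. Hence $\deg(g_0, B_R, 0) = \deg(\mathrm{id}, B_R, 0) = 1$, and chaining the equalities gives $\deg(F_\alpha, B_R, 0) = 1 \neq 0$.

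I expect the main obstacle to be twofold. The delicate analytic point is the uniform a priori bound: one must rule out zeros escaping to infinity \emph{simultaneously} along every homotopy used (the $s$-, $\beta$-, and $t$-deformations), which in each case is reduced, by a normalisation-and-limit argument, to the nonsingularity of an appropriate homogeneous map. The delicate structural point is the repeated passage between the affine map and its recession map: \Cref{lem:sel} furnishes \emph{active} policies for the problem with constants, whereas the spectral hypotheses of \Cref{as:1} must be applied to such active policies in the homogeneous regime, and bridging this gap cleanly---through the $\lambda \to \infty$ scaling that both homogenises the equation and retains activeness of the selected policy---is where the argument needs the most care. Joint continuity of all homotopies is immediate, since $P^a(\cdot)$ is polynomial and the minimum of finitely many continuous functions is continuous.
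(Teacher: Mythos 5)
Your proposal is correct, and it establishes the theorem by the same general method as the paper---topological degree, homotopy to the identity, the selection lemma, and \Cref{as:1}---but through a genuinely different decomposition. The paper (\Cref{lemma:gendelays0} together with the proof of \Cref{thm:gendelays}) never homogenizes: it deforms along $\beta \mapsto I - \beta T_0$ for $\beta \in [0,1]$ and then along $\alpha \mapsto I - T_\alpha$ for $\alpha \in [0,\alpha_0]$, keeping the constants $r^\sigma$ present throughout, and gets the a priori bound directly rather than by contradiction: by \Cref{lem:sel}, any zero solves $(I - \beta P^\sigma(0))v = \beta r^\sigma$, resp.\ $(I - P^\sigma(\alpha))v = r^\sigma$, for some active policy $\sigma$, and \Cref{as:1} makes these matrices invertible, so every zero is \emph{uniquely determined} per policy and all zeros lie in an explicit ball of radius $1 + \max_\sigma \max \lVert (I-\beta P^\sigma(0))^{-1}\rVert\,\lVert r^\sigma\rVert$ (and its $\alpha$-analogue), using compactness of the parameter interval and finiteness of the policy set. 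You instead route everything through the recession map $g_\alpha$, prove its nonsingularity via the $\lambda \to \infty$ scaling of \Cref{lem:sel} with $z_\tau = \lambda\alpha^\tau v$ and a pigeonhole over the finitely many active policies---this is the most delicate step, and you handle it correctly, since it is exactly what lets the active-policy hypotheses of \Cref{as:1} act on the homogeneous equation $v = P^\sigma(\alpha)v$---and then obtain boundedness of zeros by a normalization/blow-up argument. Your three homotopies invoke the assumptions at the junctures analogous to the paper's two: \Cref{as:1}.\eqref{as:1-1} for the deformation $\beta \mapsto g_\beta$, and \Cref{as:1}.\eqref{as:1-2} for the final contraction $t \mapsto v_i - t\min_a [P^a(0)]_i v$, where a nonzero zero would give $P^\sigma(0)v = (1/t)v$ with $1/t \in [1,\infty)$---precisely mirroring the paper's use of \Cref{as:1}.\eqref{as:1-2} to invert $I - \beta P^\sigma(0)$. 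The trade-off: the paper's route is shorter and yields explicit bounds, while yours buys a structural dividend the paper does not make explicit---the radius $R$ depends only on the recession maps, hence is locally uniform in the constants $r_i^a$, which is in the spirit of the paper's emphasis that its conditions are independent of the parameters $r_i^a$.
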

To show this theorem, we choose an arbitrary norm \(\| \cdot \|\) on \(\R^n\), and denote by
the same symbol, \(\|\cdot\|\), the induced matrix norm.

\begin{lemma}\label{lemma:gendelays0}
Assuming~\Cref{as:1}.~\eqref{as:1-2}, the system \(v = T_0(v)\) has a solution. 
\end{lemma}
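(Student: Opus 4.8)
The plan is to produce a fixed point of $T_0$ by a Brouwer degree (homotopy) argument, using \Cref{lem:sel} to reduce any candidate fixed point to a single-policy affine equation, and the eigenvalue hypothesis to obtain a uniform a priori bound. Recall that $[T_0(v)]_i = \min_{a\in\actions_i}\bigl(r_i^a + [P^a(0)]_i v\bigr)$ is a continuous, piecewise affine map. I introduce the homotopy $H_t \colon \R^n \to \R^n$, $t\in[0,1]$, defined by
\[
  [H_t(v)]_i := \min_{a\in\actions_i}\bigl(r_i^a + t\,[P^a(0)]_i\, v\bigr),
\]
so that $H_1 = T_0$, while $H_0$ is the constant map $v\mapsto\bigl(\min_{a\in\actions_i} r_i^a\bigr)_i$. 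Setting $F_t(v) := v - H_t(v)$, the map $F_0$ has the single zero $\bigl(\min_{a} r_i^a\bigr)_i$, so $\deg(F_0, B_R, 0) = 1$ for any ball $B_R$ large enough to contain it. Homotopy invariance of the degree will then yield $\deg(F_1, B_R, 0) = 1 \neq 0$, hence a zero of $F_1 = \mathrm{id} - T_0$, provided I can exhibit an $R$ for which $F_t$ does not vanish on $\partial B_R$ for any $t\in[0,1]$.

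The heart of the argument is exactly this a priori bound. Suppose $v = H_t(v)$ for some $t\in[0,1]$. Expressing $t\,[P^a(0)]_i v = \sum_{\tau}[P^a_\tau]_i z_\tau$ for the delayed-state vector $z$ with $z_0 = tv$ and $z_\tau = 0$ for $\tau\neq 0$ (if $0\notin\timedelays$ then $P^a(0)=0$ and the conclusion below is immediate), \Cref{lem:sel} furnishes an \emph{active} policy $\sigma$, depending on $(v,t)$, attaining the minimum in every coordinate, i.e.
\[
  v = r^\sigma + t\,P^\sigma(0)\,v,
  \qquad\text{equivalently}\qquad
  \bigl(I - t\,P^\sigma(0)\bigr)v = r^\sigma .
\]
Here \Cref{as:1}.~\eqref{as:1-2} enters decisively: since $P^\sigma(0)$ has no eigenvalue in $[1,+\infty)$, and $I - t\,P^\sigma(0)$ is singular for $t\in(0,1]$ precisely when $1/t\in[1,+\infty)$ is an eigenvalue of $P^\sigma(0)$, the matrix $I - t\,P^\sigma(0)$ is invertible for every $t\in[0,1]$. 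Thus $v = \bigl(I - t\,P^\sigma(0)\bigr)^{-1} r^\sigma$. Because there are finitely many active policies and, for each, $t\mapsto\bigl(I - t\,P^\sigma(0)\bigr)^{-1}$ is continuous on the compact interval $[0,1]$, the quantity $M := \max_{\sigma}\max_{t\in[0,1]}\bigl\lVert \bigl(I - t\,P^\sigma(0)\bigr)^{-1} r^\sigma\bigr\rVert$ is finite and bounds $\lVert v\rVert$ uniformly over all $t$ and all fixed points.

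Choosing any $R > M$ then guarantees that $F_t$ has no zero on $\partial B_R$ for $t\in[0,1]$, which legitimizes the homotopy-invariance step and completes the proof. I expect the main obstacle to be precisely this uniform estimate: the subtlety is that the active policy $\sigma$ selected by \Cref{lem:sel} depends on $(v,t)$, so blow-up must be excluded simultaneously over all (finitely many) active policies. This is where the condition ``no eigenvalue in $[1,+\infty)$'', rather than merely ``$1$ is not an eigenvalue'', is needed, as it keeps $I - t\,P^\sigma(0)$ invertible along the \emph{entire} homotopy $t\in[0,1]$ and not just at the endpoint $t=1$. A minor point to record is that $\deg(\,\cdot\,, B_R, 0)$ is well defined and homotopy-invariant for these continuous (piecewise affine, not smooth) maps, which is standard for the Brouwer degree.
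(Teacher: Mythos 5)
Your proposal is correct and takes essentially the same route as the paper's proof: a homotopy/degree argument in which \Cref{lem:sel} reduces any fixed point along the homotopy to a single active-policy affine system \((I - t\,P^\sigma(0))v = r^\sigma\), with Assumption~\ref{as:1}.\eqref{as:1-2} guaranteeing invertibility for \emph{all} \(t \in [0,1]\) (since singularity would force \(1/t \in [1,+\infty)\) to be an eigenvalue of \(P^\sigma(0)\)), which yields the uniform a priori bound needed for homotopy invariance. The only, immaterial, difference is the choice of homotopy: the paper scales the whole map (\(v = \beta T_0(v)\), giving \((I - \beta P^\sigma(0))v = \beta r^\sigma\) and the identity at \(\beta = 0\)), whereas you scale only the linear part and end at a constant map; both endpoints have degree \(1\) and both bounds follow from the same finiteness-of-policies and compactness argument.
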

The proof relies on topological degree theory for continuous maps, we refer
the reader to the exposition in~\cite[Chapter IV, section~2]{ruiz} for basic definitions and properties.
\begin{proof}[\proofname{} (\Cref{lemma:gendelays0})]
We actually prove that the system \(v = \beta T_0(v)\) has a solution for all \(\beta \in [0,1]\). Let us fix \(\beta \in [0,1]\). Remark that if the system \(v = \beta T_0(v)\) has a solution, then, by~\Cref{lem:sel}, there exists an active policy \(\sigma\) such that \(v = \beta T^\sigma_0(v)\), or equivalently, \((I - \beta P^\sigma(0)) v = \beta r^\sigma\). By~\Cref{as:1}.~\eqref{as:1-2}, the latter has a unique solution \(v =  \beta {(I - \beta P^\sigma(0))}^{-1} r^\sigma\). As a consequence, all zeros of \(I - \beta T_0\) for \(\beta \in [0,1]\) are bounded by \(R_0\coloneqq 1 + \max_\sigma \max_{0 \leq \beta \leq 1} \| {(I - \beta P^\sigma(0))}^{-1} \| \| r^\sigma \| < \infty\). 

Using the invariance of the topological degree under homotopy (see~\cite[Chapter IV, Prop.~2.4]{ruiz}),
this implies that the topological degree of the map \(I - \beta T_0\) w.r.t.~the ball of center \(0\) and radius \(R_0\) is invariant along the path \(\beta \in [0,1]\). Denoting the degree \(\deg\), we have for all \(\beta \in [0,1]\) and all \(R \geq R_0\),
\begin{equation}\label{eq:degree}
\deg(I-\beta T_{0}, B(0,R)) = \deg (I, B(0,R)) = 1 \,. \qedhere
\end{equation}
\end{proof}

\begin{proof}[\proofname{} (\Cref{thm:gendelays})]
By \Cref{lemma:gendelays0}, we only need to prove that the system \(v = T_\alpha(v)\) has a solution for \(\alpha \in (0,1)\). Consider \(\alpha \in (0,1)\). If \(v = T_\alpha(v)\), then, again by~\Cref{lem:sel}, there exists a policy \(\sigma\) such that \(v = T_\alpha^\sigma(v)\), or equivalently, \((I - P^\sigma(\alpha)) v = r^\sigma\). By~\Cref{as:1}.~\eqref{as:1-1}, the latter has a unique solution \(v = {(I - P^\sigma(\alpha))}^{-1} r^\sigma\). 

Now, let us fix \(\alpha_0 \in (0,1)\), and define 
\begin{equation*}  
R_{\alpha_0}\coloneqq
1 + \max_{\sigma} \max_{\alpha \in [0,\alpha_0]} \lVert {(I- P^{\sigma}(\alpha))}^{-1} \rVert \lVert r^\sigma \rVert < \infty \,,
\end{equation*}
Then, we have for all \(\alpha \in [0,\alpha_0]\) and \(R \geq \max(R_0, R_{\alpha_0})\), 
\begin{equation*}
\deg(I - T_\alpha, B(0, R)) = \deg(I- T_{0}, B(0, R)) = 1 \,,
\end{equation*}
where the last equality follows~\eqref{eq:degree}. Since the degree of the map \(I-T_\alpha\) with respect to \(B(0,R)\) is nonzero, is has a zero in the open ball \(B(0,R)\) (Coro.~2.5: (2), {\em ibid.\/}).
\end{proof}

We define a {\em germ\/} of a function at point \(1^-\) to be an equivalence class
under the relation which identifies two real functions which coincides
on some interval \((\alpha_0,1)\). 
We equip \(\Laurent\) with a structure of totally ordered field by defining that \(f \in \Laurent\) is positive if there exists \(\alpha_0\in (0,1)\) such that for all \(\alpha \in (\alpha_0, 1)\), \(f(\alpha) > 0\). This field is totally ordered because the zeros of a Laurent series cannot have a finite accumulation point.
In other words, the space of germs of Laurent series at point \(1^-\) is totally
ordered. Note that with this order, the operators \(T_\alpha\) and \(T^\sigma_\alpha\) can be defined as maps over \((\Laurent)^n\), and that~\Cref{eq:tds} and~\Cref{eq:discounted} make sense defined over \((\Laurent)^n\).

\begin{theorem}\label{thm:laurent}
  Suppose that both~\Cref{as:1} and~\Cref{as:2} hold.
  Then, the nonlinear equation \(v = T_\alpha(v)\) has a solution over \({(\Laurent)}^n\).
\end{theorem}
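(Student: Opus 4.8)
The plan is to exhibit an explicit element of \((\Laurent)^n\) of the form \(v^{\sigma^*}\), where \(\sigma^*\) is a suitably chosen active policy, and to show that it solves \(v = T_\alpha(v)\) in the totally ordered field of germs. This is precisely the analogue of \emph{Blackwell optimality}: a single policy turns out to be ``optimal'' for all \(\alpha\) sufficiently close to \(1\). First I would collect the real-valued data. By \Cref{thm:gendelays} (which only uses \Cref{as:1}), for every \(\alpha \in [0,1)\) there is a real solution \(v(\alpha) \in \R^n\) of \(v = T_\alpha(v)\). Applying \Cref{lem:sel} with \(z_\tau = \alpha^\tau v(\alpha)\) yields an active policy \(\sigma_\alpha\) attaining the minima at \(v(\alpha)\), so that \(v(\alpha) = T^{\sigma_\alpha}_\alpha(v(\alpha))\), i.e.\ \((I - P^{\sigma_\alpha}(\alpha)) v(\alpha) = r^{\sigma_\alpha}\); by \Cref{as:1}.\eqref{as:1-1} the matrix \(I - P^{\sigma_\alpha}(\alpha)\) is invertible, so \(v(\alpha) = (I - P^{\sigma_\alpha}(\alpha))^{-1} r^{\sigma_\alpha}\) is determined by \(\sigma_\alpha\) alone. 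Since there are finitely many active policies, a pigeonhole argument applied to the sequence \(\alpha = 1 - 1/m\) produces one active policy \(\sigma^*\) and a sequence \(\alpha_k \uparrow 1\) with \(\sigma_{\alpha_k} = \sigma^*\) for all \(k\).

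Next I would produce the candidate germ. Because \(\sigma^*\) is active, \Cref{as:2} applies to it, and \Cref{thm:semisimple} (taken with \(P = P^{\sigma^*}\)) guarantees that the fixed-policy equation \(v = r^{\sigma^*} + P^{\sigma^*}(\alpha) v\) has a unique solution \(v^{\sigma^*}(\alpha)\) whose entries are rational functions lying in \(\Laurent\), valid on a punctured neighbourhood of \(1\). In particular \(v^{\sigma^*} \in (\Laurent)^n\); and by uniqueness of the real fixed-policy solution we have \(v^{\sigma^*}(\alpha_k) = v(\alpha_k)\) for all large \(k\).

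Finally I would verify that \(v^{\sigma^*} = T_\alpha(v^{\sigma^*})\) over \((\Laurent)^n\). For each \(i\), every term \(r_i^a + [P^a(\alpha)]_i v^{\sigma^*}\) is a rational function belonging to \(\Laurent\), so the germ-minimum \([T_\alpha(v^{\sigma^*})]_i\) is well-defined and, since the field of germs is totally ordered and \(\actions_i\) is finite, coincides near \(1^-\) with one distinguished term \(r_i^{\hat a(i)} + [P^{\hat a(i)}(\alpha)]_i v^{\sigma^*}\). Thus there is \(\alpha_0 < 1\) such that for every real \(\alpha \in (\alpha_0, 1)\) the pointwise real minimum \([T_\alpha(v^{\sigma^*})]_i\) equals the value of this single term. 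Evaluating at the points \(\alpha_k \in (\alpha_0, 1)\) and using \(v^{\sigma^*}(\alpha_k) = v(\alpha_k) = T_{\alpha_k}(v(\alpha_k))\) gives \(v^{\sigma^*}_i(\alpha_k) = r_i^{\hat a(i)} + [P^{\hat a(i)}(\alpha_k)]_i v^{\sigma^*}(\alpha_k)\) for infinitely many \(\alpha_k \to 1\). As both sides are rational functions of \(\alpha\) agreeing at infinitely many points, they are identically equal; hence \(v^{\sigma^*}_i = [T_\alpha(v^{\sigma^*})]_i\) as germs, and \(v^{\sigma^*}\) is the desired fixed point.

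I expect the main obstacle to be this last step, where one must pass cleanly between three layers: the pointwise real minima in which \Cref{thm:gendelays} operates, the germ/ordered-field minimum defining \(T_\alpha\) on \((\Laurent)^n\), and the identity theorem for rational functions used to upgrade agreement along \(\alpha_k \to 1\) into an identity of germs. The reconciliation hinges on the fact that the germ-minimum is attained by a single term—affine in \(v^{\sigma^*}\)—on an entire left-neighbourhood of \(1\), which is exactly what the total order on \(\Laurent\) supplies, together with the finiteness of both the policy set and the action sets.
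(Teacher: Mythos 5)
Your proposal is correct and takes essentially the same route as the paper: both select a Blackwell-optimal active policy \(\sigma^\star\) from the real fixed points supplied by \Cref{thm:gendelays} together with \Cref{lem:sel}, and promote the rational solution \(v^{\sigma^\star}\in(\Laurent)^n\) given by \Cref{thm:semisimple} to a fixed point of \(T_\alpha\) over the ordered field of germs. The only difference is the finishing device---the paper evaluates at a single \(\alpha_1\) chosen after the signs of all comparison functions \([v^{\sigma}-T^{\sigma'}_\alpha(v^{\sigma})]_i\) have stabilized on \((\alpha_0,1)\), whereas you pigeonhole a sequence \(\alpha_k\uparrow 1\) and invoke the identity theorem for rational functions---and both exploit exactly the same finiteness and tameness near \(1^-\).
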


\begin{proof}
By~\Cref{thm:semisimple}, every policy \(\sigma\) defines a (unique) vector of rational functions \(v^\sigma (\alpha) \in (\Laurent)^n\) so that
\[
v^\sigma(\alpha) = \frac{\lc_{-1}^\sigma}{1-\alpha} + \lc_0^\sigma + O(1-\alpha)
\]
where \(\lc_{-1}, \lc_0 \in \R^n\), and \(v^\sigma = T^\sigma_\alpha(v^\sigma)\).

Let \(\alpha_0 \in (0,1)\) such that:
\begin{enumerate}[(a)] 
\item\label{item:assump1} for all \(\sigma\), \Cref{thm:semisimple} applies to the system \(v = T^\sigma_\alpha(v)\) over \(\R^n\) for all \(\alpha \in (\alpha_0, 1)\);
\item\label{item:assump2} for all \(\sigma\) and \(i \in [n]\), the rational function \(v^\sigma_i\) has no pole in \((\alpha_0, 1)\);
\item\label{item:assump3} for all \(\sigma, \sigma'\), the sign of the rational function \([v^\sigma - T_\alpha^{\sigma'}(v^\sigma)]_i\) is constant over \((\alpha_0, 1)\).  
\end{enumerate}

Let \(\alpha_1 \in (\alpha_0, 1)\). By \Cref{thm:gendelays}, the equation \(v=T_{\alpha_1}(v)\) admits a solution \(v_{\alpha_1}\in \R^n\). Let \(\sigma^\star\) be the active policy such that \(v_{\alpha_1} = T^{\sigma^\star}_{\alpha_1}(v_{\alpha_1})\) by~\Cref{lem:sel}.

We claim that \(v^{\sigma^\star}\) is a solution to \(v = T_\alpha(v)\) over \((\Laurent)^n\). First, by assumption~\eqref{item:assump2}, \(v^{\sigma^\star}\) is well-defined on \(\alpha = \alpha_1\). Thus, we have \(v^{\sigma^\star}(\alpha_1) = v_{\alpha_1}\) by \Cref{thm:semisimple} and assumption~\eqref{item:assump1}. Now, let \(\sigma\) be another policy. By assumption~\eqref{item:assump3}, the sign of \([v^{\sigma^\star} - T^\sigma_\alpha(v^{\sigma^\star})]_i(\alpha)\) is constant for all \(\alpha \in (\alpha_0, 1)\). Since it is nonpositive at \(\alpha = \alpha_1\), we deduce that \([v^{\sigma^\star} - T^\sigma_\alpha(v^{\sigma^\star})]_i\) is a nonpositive element of \(\Laurent\). In other words, we have \(v^{\sigma^\star} \leq T^\sigma_\alpha(v^{\sigma^\star})\) for all policies \(\sigma\), with equality when \(\sigma = \sigma^\star\).
\end{proof}

Next we present a lemma describing the correspondence between the existence of an invariant half-line of~\Cref{eq:tds} and the fulfillment of a lexicographic system.
\begin{lemma}\label{lemma-lexicographic}
  The system~\Cref{eq:tds} admits an invariant half-line \(x(t) = u + (t+ t_1) \rho\) for some \(t_1 \geq 0\) if and only if the following lexicographic system holds
  \begin{align}
    \rho_i & = \min_{a \in \actions_i} {[P^a(1)]}_i \rho \label{eq:lexeta} \,,\\
    u_i & = \min_{a \in \actions_i^\star} \left(r_i^a - \rho_i + {[P^a(1)]}_i u - {[S^a]}_i \rho \right) \label{eq:lexu} \,,
  \end{align} 
  where \(\actions_i^\star = {\arg\min}_{a \in \actions_i} {[P^a(1)]}_i \rho\) \,.
\end{lemma}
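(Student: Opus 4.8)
The plan is to substitute the half-line ansatz $x(t)=u+(t+t_1)\rho$ into the defining equation and reduce everything to an elementary comparison between an affine function and a minimum of affine functions of the single real variable $s \coloneqq t+t_1$. With this notation the half-line is invariant if and only if, for every $i\in[n]$ and every $s \geq t_1$,
\[
u_i + \rho_i s = \min_{a\in\actions_i}\Bigl(r_i^a + \sum_{\tau\in\timedelays}\sum_{j\in[n]}(P_\tau^a)_{ij}\bigl(u_j+\rho_j(s-\tau)\bigr)\Bigr).
\]
First I would expand the inner expression for a fixed action $a$. Using $P^a(1)=\sum_\tau P_\tau^a$ and $S^a=\sum_\tau(\tau-1)P_\tau^a$, the coefficient of $s$ is $\sum_\tau[P_\tau^a]_i\rho = [P^a(1)]_i\rho$, while the constant term is $r_i^a + [P^a(1)]_i u - \sum_\tau \tau[P_\tau^a]_i\rho = r_i^a + [P^a(1)]_i u - [S^a]_i\rho - [P^a(1)]_i\rho$, since $\sum_\tau\tau[P_\tau^a]_i\rho = [S^a]_i\rho+[P^a(1)]_i\rho$. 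Thus the right-hand side equals $\min_{a\in\actions_i}(A_a + B_a s)$ with (i-dependent) slope $B_a \coloneqq [P^a(1)]_i\rho$ and intercept $A_a \coloneqq r_i^a + [P^a(1)]_i u - [S^a]_i\rho - [P^a(1)]_i\rho$, and invariance becomes the identity $u_i+\rho_i s = \min_a(A_a+B_a s)$ on the half-line $s\geq t_1$.

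The core observation is that a minimum of finitely many affine functions of $s$ coincides with a prescribed affine function $u_i + \rho_i s$ on an unbounded interval $[t_1,\infty)$ if and only if both (a) the slope $\rho_i$ equals the least slope $\min_a B_a$, and (b) among the actions realizing that least slope the least intercept equals $u_i$. This is precisely the fact that, as $s\to\infty$, the value $\min_a(A_a+B_a s)$ is governed by the lexicographically smallest pair $(B_a,A_a)$, together with the fact that two affine functions agreeing on a nonempty interval agree identically; it is also what makes the resulting system genuinely \emph{lexicographic}.

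For the only-if direction I would use that the two sides agree for arbitrarily large $s$: matching asymptotic slopes forces $\rho_i = \min_{a\in\actions_i}B_a = \min_{a\in\actions_i}[P^a(1)]_i\rho$, which is \eqref{eq:lexeta}, and identifies $\actions_i^\star = {\arg\min}_{a}[P^a(1)]_i\rho$ as the set of least-slope terms. Restricting to $\actions_i^\star$ and matching the constant term gives $u_i = \min_{a\in\actions_i^\star}A_a$; since $[P^a(1)]_i\rho = \rho_i$ for $a\in\actions_i^\star$, the intercept simplifies to $A_a = r_i^a - \rho_i + [P^a(1)]_i u - [S^a]_i\rho$, which is exactly \eqref{eq:lexu}.

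For the if direction, assume \eqref{eq:lexeta} and \eqref{eq:lexu}. For $a\notin\actions_i^\star$ we have $B_a>\rho_i$, so $A_a+B_a s>u_i+\rho_i s$ once $s$ exceeds the threshold $(u_i-A_a)/(B_a-\rho_i)$; for $a\in\actions_i^\star$ we have $B_a=\rho_i$ and $A_a\geq u_i$ by \eqref{eq:lexu}, with equality at a minimizer. Choosing $t_1$ to be the maximum of $0$ and these finitely many thresholds, taken over all $i\in[n]$ and all $a\notin\actions_i^\star$, guarantees $\min_a(A_a+B_a s)=u_i+\rho_i s$ for every $s\geq t_1$ and every $i$, i.e.\ $x(t)=u+(t+t_1)\rho$ is invariant. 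The only genuinely delicate point is this last choice of $t_1$: one must exploit the finiteness of $[n]$ and of each $\actions_i$ (so that only finitely many slope gaps occur, each strictly positive) to produce a single uniform $t_1\geq 0$ beyond which all strict inequalities hold simultaneously. The algebraic identities relating $A_a$, $B_a$, $P^a(1)$ and $S^a$ are then purely routine.
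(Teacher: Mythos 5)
Your proposal is correct and follows essentially the same route as the paper's own proof: substitute the half-line ansatz, identify the slope $\rho_i=\min_a [P^a(1)]_i\rho$ from the behavior as the time variable tends to $+\infty$, observe that actions outside $\actions_i^\star$ produce terms diverging to $+\infty$ so the minimum eventually restricts to $\actions_i^\star$, and in the converse direction pick $t_1$ beyond the finitely many crossing thresholds. Your packaging via the lexicographic pair (slope, intercept) of affine functions of $s=t+t_1$ and the explicit uniform choice of $t_1$ as a maximum of finitely many thresholds is just a slightly more explicit rendering of the same argument, including the same algebraic split $\sum_\tau \tau [P^a_\tau]_i\rho=[S^a]_i\rho+[P^a(1)]_i\rho$.
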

This is proved in Appendix, section~\ref{append-lemma-lexicographic}.

\begin{theorem}\label{thm:halfline}
  Suppose that the nonlinear equation \(v = T_\alpha(v)\) has
  a solution \(v\in (\Laurent)^n\), so that
  \begin{align}
    v  & = \frac{\rho}{1-\alpha} + u + O(1-\alpha) \label{eq:asymp-l}
  \end{align}
  for some \(u,\rho\in \R^n\).
  Then, the time-delay dynamical system~\eqref{eq:tds} admits an invariant half-line \(t \mapsto u + t_1 \rho + \rho t\),
  for some \(t_1 \geq 0\).
\end{theorem}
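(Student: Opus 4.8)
The plan is to reduce everything to the lexicographic characterization of invariant half-lines furnished by \Cref{lemma-lexicographic}: I would substitute the Laurent expansion~\eqref{eq:asymp-l} of the solution $v$ into the fixed point equation $v = T_\alpha(v)$, which holds over the totally ordered field $(\Laurent)^n$, and then read off its first two Laurent coefficients to recover precisely the two equations~\eqref{eq:lexeta} and~\eqref{eq:lexu}. Once both hold, \Cref{lemma-lexicographic} delivers the desired invariant half-line.

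First I would expand each matrix polynomial $P^a(\alpha) = \sum_\tau P_\tau^a \alpha^\tau$ to first order at $\alpha = 1$. Writing the expansion in powers of $(1-\alpha)$ and using $\sum_\tau \tau P_\tau^a = P^a(1) + S^a$ with $S^a = \sum_\tau (\tau-1)P_\tau^a$, one gets $P^a(\alpha) = P^a(1) - (1-\alpha)\bigl(P^a(1) + S^a\bigr) + O((1-\alpha)^2)$. Multiplying the $i$-th row of this expansion by $v = \rho/(1-\alpha) + u + O(1-\alpha)$ and adding $r_i^a$ yields
\[
r_i^a + {[P^a(\alpha)]}_i v = \frac{{[P^a(1)]}_i \rho}{1-\alpha} + \Bigl(r_i^a - {[P^a(1)]}_i \rho + {[P^a(1)]}_i u - {[S^a]}_i \rho\Bigr) + O(1-\alpha) \,.
\]

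Next I would interpret the minimum. Since $\Laurent$ is totally ordered and $1/(1-\alpha) \to +\infty$ as $\alpha \to 1^-$, its order is the lexicographic order on the sequence of Laurent coefficients, and the minimum of finitely many germs coincides on a left-neighborhood of $1$ with a single germ, namely the lexicographically smallest one. Hence $\min_{a \in \actions_i}\bigl(r_i^a + {[P^a(\alpha)]}_i v\bigr)$ is obtained by first minimizing the coefficient of $1/(1-\alpha)$, that is ${[P^a(1)]}_i \rho$, and then, among the minimizers forming $\actions_i^\star$, minimizing the constant coefficient. Equating coefficient by coefficient in $v_i = {[T_\alpha(v)]}_i$, the coefficient of $1/(1-\alpha)$ gives $\rho_i = \min_{a \in \actions_i} {[P^a(1)]}_i \rho$, which is~\eqref{eq:lexeta}; and the constant coefficient, restricted to $a \in \actions_i^\star$ where ${[P^a(1)]}_i \rho = \rho_i$, gives $u_i = \min_{a \in \actions_i^\star}\bigl(r_i^a - \rho_i + {[P^a(1)]}_i u - {[S^a]}_i \rho\bigr)$, which is~\eqref{eq:lexu}. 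Applying \Cref{lemma-lexicographic} then produces an invariant half-line $t \mapsto u + t_1 \rho + \rho t$ of~\eqref{eq:tds} for some $t_1 \geq 0$.

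The main obstacle I anticipate is the rigorous justification that the minimum in the ordered field $\Laurent$ distributes correctly across the two coefficient levels, i.e.\ that the leading and constant coefficients of the min-germ are exactly those produced by the two-stage lexicographic selection, with no interaction between levels. This rests on two facts that must be invoked carefully: that the minimum of finitely many elements of a totally ordered set is one of those elements, and that the germ order at $1^-$ is the explicit lexicographic order on Laurent coefficients. Once these are in place, the coefficient matching in the final step is a routine computation.
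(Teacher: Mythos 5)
Your proposal is correct and follows essentially the same route as the paper: the identical first-order expansion of \(P^a(\alpha)\) at \(\alpha=1\) using \(\sum_\tau \tau P^a_\tau = P^a(1)+S^a\), reduction of the fixed-point equation over \((\Laurent)^n\) to the lexicographic system of \Cref{lemma-lexicographic}, and then invoking that lemma (which is indeed the intended reference at the end of the paper's proof, where \Cref{lemma:gendelays0} is cited by a slip). The only cosmetic difference is that you extract the two equations by matching Laurent coefficients in the lexicographically ordered field of germs, whereas the paper does so by multiplying by \(1-\alpha\) and letting \(\alpha \to 1^-\), observing that the terms with \(a \notin \actions_i^\star\) blow up --- two phrasings of the same argument.
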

\begin{proof}
  Substituting the solution~\eqref{eq:asymp-l} into our equation \(v = T_\alpha(v)\) then for each \(i \in [n]\) we get
  \begin{equation}
    \begin{split}        
      & \frac{\rho_i}{1-\alpha} + u_i + O(1-\alpha) \\
      &= \min_{a \in \actions_i} \biggl( r^a_i + {[P^a(\alpha)]}_i \Bigl( \frac{\rho}{1-\alpha}  + u + O(1-\alpha) \Bigr) \biggr) \,.
    \end{split} \label{eq:halfline-disc}
  \end{equation}
  We have the following Taylor expansion of \({[P^a(\alpha)]}_i\) at \(\alpha = 1\)
  \begin{align*}
    & {[P^a(\alpha)]}_i \\
    & = {[P^a(1)]}_i - (1-\alpha) \sum_\tau \tau {[P_\tau^a]}_i + {O(1-\alpha)}^2 \\
    & = {[P^a(1)]}_i  - (1-\alpha) \biggl( \sum_\tau (\tau-1) {[P_\tau^a]}_i + \sum_\tau {[P_\tau^a]}_i \biggr) + {O(1-\alpha)}^2 \\
    & = {[P^a(1)]}_i  - (1-\alpha) \bigl( {[S^a]}_i + {[P^a(1)]}_i \bigr) + {O(1-\alpha)}^2 \,.
  \end{align*}
  Substituting this expansion into~\eqref{eq:halfline-disc} we get

  \begin{align}
    &\frac{\rho_i}{1-\alpha} + u_i + O(1-\alpha) \notag \\
    &= \min_{a \in \actions_i} \begin{aligned}[t]
        &\biggl( r^a_i + \Bigl({[P^a(1)]}_i - (1-\alpha)\bigl({[S^a]}_i + {[P^a(1)]}_i\bigr) \\
        &\quad + {O(1-\alpha)}^2 \Bigr) \Bigl(\frac{\rho}{1-\alpha} + u + O(1-\alpha)\Bigr) \biggr)
    \end{aligned} \notag \\
    &= \min_{a \in \actions_i} \begin{aligned}[t]
        &\biggl( \frac{{[P^a(1)]}_i \rho}{1-\alpha} + r^a_i - {[P^a(1)]}_i \rho \\
        &\quad + {[P^a(1)]}_i u - {[S^a]}_i \rho + O(1-\alpha) \biggr) \,.
    \end{aligned} \label{eq:taylor}
  \end{align}
  In an analogous manner to the proof of~\Cref{lemma:gendelays0} we will prove the equivalence of~\eqref{eq:taylor} to the lexicographic system reproduced here
  \begin{align}
    \rho_i  & = \min_{a\in \actions_i} {[P^a(1)]}_{i} \rho\label{eq:lexeta-halfline} \,,\\
    u_i &= \min_{a\in \actions_i^\star} \Big( r_i^a + {[P^a(1)]}_{i} u - {[S^a]}_i \rho + {[P^a(1)]}_{i} \rho \Big) \,, \label{eq:lexu-halfline}
  \end{align}
  where, as before, \(\actions_i^\star\) is defined as \(\arg\min_{a \in \actions_i}{[P^a(1)]}_{i} \rho\).

  First, multiplying by \(1-\alpha\) on both sides of~\eqref{eq:taylor} gives
  \begin{align*}
    \rho_i + O(1-\alpha) = \min_{a \in \actions_i} \left( {[P^a(1)]}_i \rho + O(1-\alpha) \right) \,,
  \end{align*}
  where taking the limit \(\alpha \to 1\) shows that~\eqref{eq:lexeta-halfline} is satisfied.

  Next, we rewrite~\eqref{eq:taylor} as follows,
  \begin{align*}
    u_i = \min_{a \in \actions_i} &\biggl( \frac{{[P^a(1)]}_i \rho - \rho_i}{1-\alpha} + r^a_i - {[P^a(1)]}_i \rho \\ 
    &\quad + {[P^a(1)]}_i u - {[S^a]}_i \rho + O(1-\alpha) \biggr) \,.
  \end{align*}
  By equation~\eqref{eq:lexeta-halfline}, the factor \({[P^a(1)]}_i \rho - \rho_i\) is strictly positive for every \(a \not\in \actions_i^\star\) so the term \(\frac{{[P^a(1)]}_i \rho - \rho_i}{1-\alpha}\) goes to \(+\infty\) as \(\alpha\) converges to \(1\) from below. Thus the equation must be restricted to the case where \(a \in \actions_i^\star\) and the factor is zero. This implies that
  \begin{align*}
    u_i = \min_{a\in \actions_i^\star} \bigl( r_i^a + {[P^a(1)]}_{i} u - {[S^a]}_i \rho + {[P^a(1)]}_{i} + O(1-\alpha) \rho \bigr) \,,
  \end{align*}
  where taking the limit \(\alpha \to 1^-\) gives that~\eqref{eq:lexu-halfline} is satisfied.

  With the satisfaction of the lexicographic system~\eqref{eq:lexeta-halfline} and~\eqref{eq:lexu-halfline} we can apply~\Cref{lemma:gendelays0} which completes the proof.\qedhere

\end{proof}
As an immediate consequence of~\Cref{thm:laurent} and~\Cref{thm:halfline}, we now arrive
at our main result:
\begin{theorem}\label{cor:main}
  Under Assumptions~\ref{as:1} and~\ref{as:2}, the piecewise linear dynamical system with priorities~\eqref{eq:tds} admits a stationary solution.
\end{theorem}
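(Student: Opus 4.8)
The plan is to combine the two immediately preceding theorems, since this statement is designed to fall out of them with no extra work. First I would invoke \Cref{thm:laurent}: as both \Cref{as:1} and \Cref{as:2} are assumed to hold, that theorem guarantees that the discounted fixed-point equation \(v = T_\alpha(v)\) admits a solution \(v\) in the space \((\Laurent)^n\) of germs of Laurent series at the point \(1^-\).

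Next I would unpack what membership in \((\Laurent)^n\) gives us. By the definition of \(\Laurent\) recalled earlier, each component \(v_i(\alpha)\) has an absolutely convergent Laurent expansion \(\frac{\lc_{-1}}{1-\alpha} + \lc_0 + (1-\alpha)\lc_1 + \cdots\) in a punctured disk centered at \(\alpha = 1\). Collecting the residues of the \(n\) components into a vector \(\rho \in \R^n\) and their constant terms into a vector \(u \in \R^n\), the solution \(v\) thus satisfies
\[
v = \frac{\rho}{1-\alpha} + u + O(1-\alpha) \,,
\]
which is precisely the asymptotic form \eqref{eq:asymp-l} posited as the hypothesis of \Cref{thm:halfline}. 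Applying \Cref{thm:halfline} to this \(v\) then yields that the time-delay dynamical system \eqref{eq:tds} admits an invariant half-line \(t \mapsto u + t_1 \rho + \rho t\) for some \(t_1 \geq 0\); since such an invariant half-line is exactly what we mean by a stationary solution, this completes the argument.

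I do not anticipate any genuine obstacle, as all of the substantive content has already been discharged upstream: the existence of the discounted solution (via the topological-degree and homotopy arguments of \Cref{thm:gendelays} together with the Blackwell-type selection in \Cref{thm:laurent}) and the passage from a Laurent expansion to the lexicographic system governing the half-line (\Cref{thm:halfline}, resting on \Cref{lemma-lexicographic}). The only step to make explicit is the elementary identification of the leading Laurent coefficient of the discounted solution with the throughput vector \(\rho\) and of the constant coefficient with the offset \(u\); the matching of the \(O(1-\alpha)\) remainder is automatic from membership in \(\Laurent\).
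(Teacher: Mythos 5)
Your proposal is correct and follows essentially the same route as the paper, which indeed presents \Cref{cor:main} as an immediate consequence of \Cref{thm:laurent} and \Cref{thm:halfline} composed exactly as you describe. The only glue step---reading off the residue vector $\rho$ and constant coefficient $u$ from the Laurent expansion to match the hypothesis \eqref{eq:asymp-l}---is as trivial as you claim, so nothing is missing.
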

\section{Extending Kohlberg's invariant half-line theorem}\label{sec:kohl}
Our method leads to an extension of Kohlberg's result, showing that
a piecewise linear nonexpansive map admits an invariant half-line.
To present this extension, we now consider the more general class of dynamics given by
\begin{align}\label{eq:tds2}
  x_i(t) = \max_{b\in\mathcal{B}_i} \min_{a\in \actions_{i,b}} \biggl( r_i^{ab} + \sum_{\tau\in \timedelays} \sum_{j\in [n]} {(P_\tau^{ab})}_{ij} x_j(t-\tau ) \biggr)\,, \tag{\(D^*\)}
\end{align}
for all \(i \in [n]\) and \(t \geq 0\). The relevance of this class of systems is motivated by a result of Ovchinnikov~\cite{ovchinnikov} who shows that every piecewise linear map can be expressed as a max/min combination of linear maps, like in the right-hand side of~\eqref{eq:tds2}. Invariant half-lines are defined similarly to the previous case, \ie, as functions $z(t) = u + \rho (t + t_1)$ (with $u, \rho \in \R^n$, $t_1 \geq 0$) that are left invariant by the dynamics~\eqref{eq:tds2} for all $t \geq 0$.

We proceed in a manner analogously to the so far dealt with minimization case. First, we define the notion of policies $\sigma \colon \cup_{i\in [n]} (\{i\}\times \mathcal{B}_i) \to \cup_{i\in[n]} \mathcal{A}_i$
of Min and $\pi \colon [n]\to \cup_{i\in [n]}\mathcal{B}_i$ of Max and a pair of policies defined by the pair \((\sigma, \pi)\). Note that the minimization is done with regards to what action has been taken for the maximization but not vice-versa, this is what is adapted to in our extended definitions.
Then, we can define an {\em active joint policy\/} as a joint policy \((\sigma, \pi)\) such that the set of vectors $z=(z_\tau)_{\tau \in \mathcal{T}}\in (\R^n)^{\mathcal{T}}$ fulfilling
\begin{align*}
  &\max_{b\in\mathcal{B}_i} \min_{a\in \actions_{i,b}} \biggl( r_i^{ab} + \sum_{\tau\in \timedelays}  (P_\tau^{ab})_{i} z_\tau  \biggr)\\
  &= r_i^{\sigma(i,\pi(i)),\pi(i)} + \sum_{\tau\in \timedelays}  \Bigl(P_\tau^{\sigma(i,\pi(i)),\pi(i)}\Bigr)_{i} z_\tau
  \end{align*}
has a non-empty interior for all \(i\in[n]\).

Further, we can formulate a new selection lemma. We skip
the proof as it is an immediate variation on the original proof.
\begin{lemma}[Nested Selection]\label{lem:joint-sel}
  For all $(z_\tau)_{\tau\in\mathcal{T}}\in (\R^n)^{\mathcal{T}}$, there is an active joint policy \((\sigma, \pi)\) such that
  \begin{align}
    &\max_{b\in\mathcal{B}_i} \min_{a\in \actions_{i,b}} \Bigg( r_i^{ab} + \sum_{\tau\in \timedelays}  (P_\tau^{ab})_{i} z_\tau  \Bigg) \notag \\
    &= r_i^{\sigma(i,\pi(i)),\pi(i)} + \sum_{\tau\in \timedelays} {\bigl(P_\tau^{\sigma(i,\pi(i)),\pi(i)}\bigr)}_{i} z_\tau \,. \label{eq:joint-sel}
  \end{align}
\end{lemma}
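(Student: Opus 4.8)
The plan is to replicate the proof of~\Cref{lem:sel} almost verbatim, replacing single policies $\sigma$ by joint policies $(\sigma,\pi)$ and checking that the nested $\max$--$\min$ structure still produces \emph{polyhedral} decision regions. For brevity write $\ell_i^{ab}(z) := r_i^{ab} + \sum_{\tau\in\timedelays}(P_\tau^{ab})_i z_\tau$, which is an affine function of $z=(z_\tau)_{\tau\in\timedelays}\in(\R^n)^{\timedelays}$. To each joint policy $(\sigma,\pi)$ I would associate the set $E^{\sigma,\pi}$ of those $z$ satisfying, for every $i\in[n]$, two families of constraints: (a) for every $b\in\mathcal{B}_i$ and every $a\in\actions_{i,b}$, the inequality $\ell_i^{\sigma(i,b),b}(z)\leq\ell_i^{ab}(z)$, encoding that $\sigma(i,b)$ attains the inner minimum over $\actions_{i,b}$; and (b) for every $b\in\mathcal{B}_i$, the inequality $\ell_i^{\sigma(i,b),b}(z)\leq\ell_i^{\sigma(i,\pi(i)),\pi(i)}(z)$, encoding that $\pi(i)$ attains the outer maximum.

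First I would observe that all of these are affine inequalities in $z$, so that $E^{\sigma,\pi}$ is a closed polyhedron, and that on $E^{\sigma,\pi}$ the selection identity~\eqref{eq:joint-sel} holds: condition (a) forces $\min_{a\in\actions_{i,b}}\ell_i^{ab}(z)=\ell_i^{\sigma(i,b),b}(z)$ for each $b$, after which condition (b) forces $\max_{b\in\mathcal{B}_i}\min_{a\in\actions_{i,b}}\ell_i^{ab}(z)=\ell_i^{\sigma(i,\pi(i)),\pi(i)}(z)$. Next I would verify the covering $(\R^n)^{\timedelays}=\bigcup_{(\sigma,\pi)}E^{\sigma,\pi}$: given any $z$, pick $\sigma(i,b)\in\arg\min_{a\in\actions_{i,b}}\ell_i^{ab}(z)$ for each pair $(i,b)$, and then $\pi(i)\in\arg\max_{b\in\mathcal{B}_i}\ell_i^{\sigma(i,b),b}(z)$ for each $i$; by construction $z\in E^{\sigma,\pi}$.

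Finally, exactly as in~\Cref{lem:sel}, I would discard the inactive joint policies. Since each $E^{\sigma,\pi}$ is a closed polyhedron and every polyhedron of non-empty interior is the closure of its interior, the $E^{\sigma,\pi}$ with empty interior (the inactive joint policies) form a finite union of closed nowhere-dense sets, whose complement is dense; as the union of the active $E^{\sigma,\pi}$ is closed and contains this dense complement, it must equal all of $(\R^n)^{\timedelays}$, which produces an active joint policy through every $z$ and proves the lemma. The only point requiring genuine attention---and hence the main obstacle---is the verification that constraint (b) is affine: a priori $\max$--$\min$ is merely piecewise linear, but once $\sigma$ fixes the inner arg-minima the inner values become the affine functions $\ell_i^{\sigma(i,b),b}$, so the requirement that $\pi(i)$ beat every $b$ linearizes into the stated inequalities; everything else is the formal transcription of the single-policy argument, which is why the authors regard it as an immediate variation.
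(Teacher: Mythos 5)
Your proof is correct and is precisely the argument the paper intends: the paper skips the proof of \Cref{lem:joint-sel}, declaring it an immediate variation of the proof of \Cref{lem:sel}, and your polyhedral sets \(E^{\sigma,\pi}\) (with the inner arg-min constraints (a) and the linearized outer arg-max constraints (b)), the covering of \((\R^n)^{\timedelays}\) by pointwise arg-min/arg-max selections, and the closure-of-interior density argument to discard joint policies with lower-dimensional cells reproduce that variation faithfully, including the correct handling of \(\sigma\) depending on \((i,b)\) but \(\pi\) not depending on \(a\). The only point of genuine attention you identified—that fixing \(\sigma\) turns the outer maximization into affine comparisons among the functions \(\ell_i^{\sigma(i,b),b}\), so each \(E^{\sigma,\pi}\) is indeed a closed polyhedron on which the selection identity holds—is resolved correctly, so there is nothing to add.
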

We also extend the assumptions~\Cref{as:1} and~\Cref{as:2} to
\begin{assumptionprime}\label{as:1prime}
  For each active joint policy \((\sigma, \pi)\) we have that
  \begin{enumerate}[(i)]
    \item For all \(0\leq \alpha <1\), \(1\) is not an eigenvalue of \(P^{\sigma \pi}(\alpha)\),\label{as:1-1prime}
    \item The matrix \(P^{\sigma \pi}(0)\) has no eigenvalue in \([1, +\infty)\);\label{as:1-2prime}
  \end{enumerate}
\end{assumptionprime}
\begin{assumptionprime}\label{as:2prime}
  For each active joint policy \((\sigma, \pi)\),
  \begin{enumerate}[(i)]
    \item either \(1\) is not an eigenvalue of \(P^{\sigma \pi}(1)\) or it is a semisimple eigenvalue of \(P^{\sigma \pi}(1)\); \label{as:2-1prime}
    \item the matrix \(I + \LC_{-1}^{\sigma \pi} S^{\sigma \pi}\) is invertible, where \(\LC_{-1}^{\sigma \pi}\) and \(S^{\sigma \pi}\) are defined as in~\Cref{thm:semisimple}, replacing \(P\) by \(P^{\sigma \pi}\). \label{as:2-2prime}
  \end{enumerate}
\end{assumptionprime}
The following theorem extends~\Cref{cor:main} to the ``max-min'' case.
\begin{theorem}\label{thm:maxmin}
  Suppose that Assumptions~\ref{as:1prime} and~\ref{as:2prime} hold. Then, the dynamical system~\eqref{eq:tds2} admits a stationary solution.
\end{theorem}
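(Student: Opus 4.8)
The plan is to transpose, almost line by line, the three-stage development of the pure minimization case---\Cref{thm:gendelays}, \Cref{thm:laurent} and \Cref{thm:halfline}---replacing the Selection Lemma by the Nested Selection Lemma (\Cref{lem:joint-sel}), individual policies $\sigma$ by joint policies $(\sigma,\pi)$, and \Cref{as:1}--\Cref{as:2} by their primed counterparts \Cref{as:1prime}--\Cref{as:2prime}. I set $[T_\alpha(v)]_i = \max_{b\in\mathcal{B}_i}\min_{a\in\actions_{i,b}}(r_i^{ab} + [P^{ab}(\alpha)]_i v)$ and, for each joint policy, $T^{\sigma\pi}_\alpha(v) = r^{\sigma\pi} + P^{\sigma\pi}(\alpha)v$. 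Since only $\sigma(i,\pi(i))$ enters the $i$-th row, one has the componentwise identity $[T_\alpha(v)]_i = \max_\pi \min_\sigma [T^{\sigma\pi}_\alpha(v)]_i$, which is the structural fact that makes the whole transposition possible. The real-solution analog of \Cref{thm:gendelays}---existence of $v\in\R^n$ with $v=T_\alpha(v)$ for all $\alpha\in[0,1)$---then carries over verbatim: $T_\alpha$ is continuous, being a max-min of affine maps, and the only thing to re-verify for the degree homotopy of \Cref{lemma:gendelays0} is the uniform bound on zeros. If $v=\beta T_0(v)$ (resp.\ $v=T_\alpha(v)$), then \Cref{lem:joint-sel} yields an active joint policy with $v=\beta T^{\sigma\pi}_0(v)$ (resp.\ $v=T^{\sigma\pi}_\alpha(v)$), and \Cref{as:1prime}.\ref{as:1-2prime} (resp.\ \Cref{as:1prime}.\ref{as:1-1prime}) makes the relevant matrix invertible, bounding $v$ uniformly over the finitely many joint policies.

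Next comes the Laurent-solution analog of \Cref{thm:laurent}. Applying \Cref{thm:semisimple} to each matrix polynomial $P^{\sigma\pi}(\alpha)$ under \Cref{as:2prime} furnishes, for every joint policy, a unique $v^{\sigma\pi}\in(\Laurent)^n$ with $v^{\sigma\pi}=T^{\sigma\pi}_\alpha(v^{\sigma\pi})$. I would fix $\alpha_0\in(0,1)$ close enough to $1$ so that, for all joint policies, \Cref{thm:semisimple} applies on $(\alpha_0,1)$, the $v^{\sigma\pi}_i$ have no pole there, and the sign of every $[v^{\sigma\pi}-T^{\sigma'\pi'}_\alpha(v^{\sigma\pi})]_i$ is constant on $(\alpha_0,1)$; this is possible as there are finitely many such rational functions. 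Choosing $\alpha_1\in(\alpha_0,1)$, the real-solution step gives a fixed point $v_{\alpha_1}$, and \Cref{lem:joint-sel} an active joint policy $(\sigma^\star,\pi^\star)$ with $v_{\alpha_1}=T^{\sigma^\star\pi^\star}_{\alpha_1}(v_{\alpha_1})$. By uniqueness in \Cref{thm:semisimple} one has $v^{\sigma^\star\pi^\star}(\alpha_1)=v_{\alpha_1}$, and I claim that $v:=v^{\sigma^\star\pi^\star}$ solves $v=T_\alpha(v)$ over the germ field $(\Laurent)^n$.

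This claim is the one genuinely new step, and the main obstacle, because---unlike the min-only case, where a single inequality sufficed---it demands a two-sided saddle argument. On the Min side, since $\pi^\star(i)$ attains the outer max at the exact fixed point, $v_{\alpha_1,i}=\min_{a\in\actions_{i,\pi^\star(i)}}(r_i^{a\pi^\star(i)}+[P^{a\pi^\star(i)}(\alpha_1)]_i v_{\alpha_1})$, so $[v-T^{\sigma\pi^\star}_\alpha(v)]_i\le 0$ at $\alpha_1$ for every $\sigma$; by sign-constancy this persists on $(\alpha_0,1)$, giving $v\le\min_\sigma T^{\sigma\pi^\star}_\alpha(v)\le T_\alpha(v)$ in germs. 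On the Max side, for an arbitrary $\pi$ the inner min for $b=\pi(i)$ cannot exceed the max-min value $v_{\alpha_1,i}$, so Min's best response $a$ satisfies $r_i^{a\pi(i)}+[P^{a\pi(i)}(\alpha_1)]_i v_{\alpha_1}\le v_{\alpha_1,i}$; choosing $\sigma$ accordingly gives $[v-T^{\sigma\pi}_\alpha(v)]_i\ge 0$ at $\alpha_1$, hence in germs, so $\min_\sigma T^{\sigma\pi}_\alpha(v)\le v$ for each $\pi$ and therefore $T_\alpha(v)\le v$. Combining the two inequalities yields $v=T_\alpha(v)$ over $(\Laurent)^n$.

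Finally I would reproduce \Cref{thm:halfline}. Writing the germ solution as $v=\rho/(1-\alpha)+u+O(1-\alpha)$, substituting the Taylor expansion $[P^{ab}(\alpha)]_i=[P^{ab}(1)]_i-(1-\alpha)([S^{ab}]_i+[P^{ab}(1)]_i)+O((1-\alpha)^2)$, and separating powers of $1/(1-\alpha)$ leads to the max-min lexicographic system $\rho_i=\max_{b\in\mathcal{B}_i}\min_{a\in\actions_{i,b}}[P^{ab}(1)]_i\rho$ together with a second-order equation for $u$ restricted to the argmax-argmin index sets. The divergent terms $([P^{ab}(1)]_i\rho-\rho_i)/(1-\alpha)$ enforce this restriction exactly as in the min case: sub-optimal $b$ carry a strictly smaller leading coefficient and drop out of the outer max, while sub-optimal $a$ within an optimal $b$ carry a strictly larger one and drop out of the inner min. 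An immediate max-min analog of \Cref{lemma-lexicographic} then converts the satisfaction of this lexicographic system into the invariant half-line $t\mapsto u+t_1\rho+\rho t$ of \eqref{eq:tds2}, completing the proof.
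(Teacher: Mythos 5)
Your proposal is correct and takes essentially the same approach as the paper: the paper's proof of \Cref{thm:maxmin} consists of the single remark that the argument is identical to the minimization case with \Cref{lem:joint-sel} substituted for \Cref{lem:sel}, which is exactly the transposition of \Cref{thm:gendelays}, \Cref{thm:laurent} and \Cref{thm:halfline} that you carry out. You in fact supply more detail than the paper does, in particular the two-sided saddle argument replacing the one-sided inequality of \Cref{thm:laurent}---note only that its Min side uses the fact that \(\pi^\star(i)\) attains the outer max at \(v_{\alpha_1}\), which goes slightly beyond the bare value equality stated in \Cref{lem:joint-sel} but is delivered by that lemma's natural proof (select \(\pi(i)\) in the argmax and \(\sigma(i,b)\) in the argmin, so the covering polyhedra encode attainment).
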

\begin{proof}
  The proof is identical to the minimization case except that we use the joint selection property~\Cref{lem:joint-sel} in the places where~\Cref{lem:sel} was invoked. 
  \end{proof}
We next recover Kohlberg's theorem on invariant half-lines
as a special case of~\Cref{thm:maxmin}.
\begin{corollary}[{\cite{Koh80}}]\label{cor:kohl}
  Suppose that $T$ is a piecewise affine self-map of $\R^n$ that
  is nonexpansive in a norm $\|\cdot\|$. Then, $T$ admits an invariant
  half-line.
\end{corollary}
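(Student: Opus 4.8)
The plan is to realize \(T\) as an instance of the max-min dynamics~\eqref{eq:tds2} with unit delays, and then to show that nonexpansiveness forces Assumptions~\ref{as:1prime} and~\ref{as:2prime}, so that \Cref{thm:maxmin} applies directly. First I would invoke Ovchinnikov's representation~\cite{ovchinnikov}: since \(T\) is nonexpansive it is in particular Lipschitz, hence continuous and piecewise affine, so each coordinate \(T_i\) can be written as \(\max_{b\in\mathcal{B}_i}\min_{a\in\actions_{i,b}}\bigl(r_i^{ab}+[P^{ab}]_i x\bigr)\) for finitely many affine selections. Setting \(\timedelays=\{1\}\) and interpreting the invariant half-lines of the unit-delay system \(x(t)=T(x(t-1))\) as the invariant half-lines of \(T\), this exhibits \(T\) inside the class~\eqref{eq:tds2}.

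The key reduction is that for every active joint policy \((\sigma,\pi)\), the linear part \(P^{\sigma\pi}\coloneqq P_1^{\sigma\pi}\) satisfies \(\|P^{\sigma\pi}\|\le 1\) in the induced operator norm. Indeed, by the definition of an active joint policy underlying \Cref{lem:joint-sel}, the map \(T\) coincides with the affine map \(x\mapsto r^{\sigma\pi}+P^{\sigma\pi}x\) on a set \(U\) of nonempty interior. For \(x,y\) in the interior of \(U\), nonexpansiveness gives \(\|P^{\sigma\pi}(x-y)\|=\|T(x)-T(y)\|\le\|x-y\|\); since \(x-y\) ranges over a full neighborhood of the origin, positive homogeneity of the linear map then yields \(\|P^{\sigma\pi}z\|\le\|z\|\) for all \(z\), i.e.\ \(\|P^{\sigma\pi}\|\le 1\).

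With this bound in hand the assumptions follow quickly, because the unit-delay choice degenerates the matrix polynomial: \(P^{\sigma\pi}(\alpha)=\alpha P^{\sigma\pi}\), \(P^{\sigma\pi}(0)=0\), \(P^{\sigma\pi}(1)=P^{\sigma\pi}\), and \(S^{\sigma\pi}=\sum_\tau(\tau-1)P_\tau^{\sigma\pi}=0\). Since \(\|P^{\sigma\pi}\|\le 1\), every eigenvalue \(\lambda\) of \(P^{\sigma\pi}\) obeys \(|\lambda|\le 1\); hence for \(0\le\alpha<1\) the eigenvalues \(\alpha\lambda\) of \(\alpha P^{\sigma\pi}\) have modulus \(<1\), so \(1\) is not an eigenvalue, giving Assumption~\ref{as:1prime}.~\eqref{as:1-1prime}, while \(P^{\sigma\pi}(0)=0\) has no eigenvalue in \([1,\infty)\), giving~\eqref{as:1-2prime}. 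For Assumption~\ref{as:2prime}, the bound makes \(P^{\sigma\pi}\) power-bounded, \(\|(P^{\sigma\pi})^k\|\le 1\) for all \(k\); a Jordan block of size at least \(2\) attached to a modulus-one eigenvalue would force \(\|(P^{\sigma\pi})^k\|\) to grow at least linearly in \(k\), so the eigenvalue \(1\), if present, must be semisimple, establishing~\eqref{as:2-1prime}. Finally \(S^{\sigma\pi}=0\) gives \(I+\LC_{-1}^{\sigma\pi}S^{\sigma\pi}=I\), which is invertible, so~\eqref{as:2-2prime} holds. Thus both assumptions are satisfied and \Cref{thm:maxmin} produces the invariant half-line.

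The step I expect to be the main obstacle is the reduction of the second paragraph, connecting the global nonexpansiveness of the nonlinear map \(T\) to the spectral structure of the linear parts of its active policies. Transferring nonexpansiveness to the operator-norm bound \(\|P^{\sigma\pi}\|\le 1\) relies crucially on the active selection regions being full-dimensional (not merely closed), and upgrading this bound to semisimplicity of the eigenvalue \(1\) uses the classical power-boundedness argument. Everything else—Ovchinnikov's representation and the vanishings \(P^{\sigma\pi}(0)=0\) and \(S^{\sigma\pi}=0\) forced by the unit delays—is routine.
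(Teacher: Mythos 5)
Your proposal is correct and follows essentially the same route as the paper: Ovchinnikov's max--min representation with unit delays, transfer of nonexpansiveness from $T$ to the linear parts $P^{\sigma\pi}$ of active joint policies via the full-dimensional selection regions (a step the paper glosses in exactly the same way), the unit-delay degeneracies $P^{\sigma\pi}(0)=0$ and $S^{\sigma\pi}=0$, and an application of \Cref{thm:maxmin}. The only deviation is your proof of semisimplicity for Assumption~\ref{as:2prime}.(\ref{as:2-1prime}): you argue via power-boundedness of $P^{\sigma\pi}$ and the linear growth forced by a nontrivial Jordan block, whereas the paper's \Cref{lemma:simple} bounds the resolvent $(I-\alpha P)^{-1}$ in a weighted sup-norm to show the pole at $\alpha=1$ has order at most one and then invokes Kato --- two classical, interchangeable arguments for the same fact.
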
  
\begin{proof}
  By~\cite{ovchinnikov}, every coordinate of a piecewise linear function $T$
  can be written as
  \begin{equation*}
  T_i(x) = \max_{b\in \mathcal{B}_i} \min_{a\in \actions_{i,b}} (r_i^{ab} + \sum_{j\in[n]}(P^{ab})_{ij} x_j) \,,
  \end{equation*}
  where for all \(i\in [n]\), \(\mathcal{B}_i\) is a finite set,
  and for all \(b\in \mathcal{B}_i\), \(\actions_{i,b}\) is also a finite
  set, and \(r_i^{ab}\) and and \(P^{ab}_{ij}\) are real numbers.
  It also follows from~\cite{ovchinnikov}
  that the representation can be chosen
  so that for all $i\in[n]$, $b\in\mathcal{B}_i$, $a\in\actions_{i,b}$,
  we have $T_i(x) = r_i^{ab} + \sum_{j\in[n]}(P^{ab})_{ij} x_j$ for all
  $x$ belonging to a polyhedron of non-empty interior.

  In essence, for each joint policy \((\sigma, \pi)\) we are left with a linear extension of a linear function
  \begin{equation*}
    T_i^{\sigma(\pi(i)), \pi(i)} = r_i^{\sigma(\pi(i)), \pi(i)} + \sum_{j\in[n]}(P^{\sigma(\pi(i)), \pi(i)})_{ij} x_j
  \end{equation*}
  which, by virtue of extending an affine operator from a full-dimensional subset to the entire space, inherits the property of nonexpansiveness, implying that \(P^{\sigma(\pi(i)), \pi(i)}\) is nonexpansive for every joint policy \((\sigma, \pi)\).
  
  We restrict our view to unit delays, implying that \(0 \not\in \timedelays\). By the simple observation then that \(\alpha P^{\sigma(\pi(i)), \pi(i)}\) is an \(\alpha\)-contraction for all \(\alpha \in [0, 1)\) we get assumption~\Cref{as:1prime}.

  Next we present a lemma implying that nonexpansiveness gives the remaining assumptions.

  \begin{lemma}\label{lemma:simple}
    The spectrum of a nonexpansive operator \(P:\R^n \to \R^n\) either does not contain \(1\) or \(1\) is a semisimple eigenvalue of \(P\).
  \end{lemma}
  This is proved in Appendix~\ref{append:lemma:simple}.
  
  Applying \Cref{lemma:simple}, we instantly get that Assumption~B'.(\ref{as:2-1prime})
  is satisfied. Moreover, since $\timedelays=\{1\}$, Assumption~B'.(\ref{as:2-2prime})
  is trivially satisfied. So, \Cref{cor:kohl} follows
  from~\Cref{cor:main}.
\end{proof}

\section{Application to timed Petri nets with priorities}\label{sec:automaticPetri}
As described in~\S\ref{sec:motivation}, we are interested in the counter functions, \({(z_q)}_{q \in \mathcal{Q}}\) associated to the transitions \(q\) of timed Petri nets with priorities in order to describe their dynamical properties.
These equations can in general be reduced to the cases shown in~\Cref{tab-counter}, as detailed in~\cite[Chapter~1]{boyet2022}. There, $q\inc$ stands for the set of upstream places of a transition $q$, $p\inc$ for the set of upstream transitions of a place $p$. Moreover, $\mathcal{Q}_{\text{psel}}$ and $\mathcal{Q}_{\text{prio}}$ correspond to the (disjoint) sets of transitions respectively involved in preselection and priority rules.

\ifbool{tabularray}{%
\begin{table}[h]
  \begin{tblr}{
    colspec={ll}, 
    rowspec={|Q[headergray]|Q[tablegray]|Q[tablegray]|Q[tablegray]},
    hlines
    }
      Type & Counter equation \\
      \( \displaystyle q \notin (\mathcal{Q}_{\text{psel}} \cup \mathcal{Q}_{\text{prio}})  \) & \( \displaystyle z_{q} (t) = \min_{p \in q^{\text{in}}} \left( r_q^p + \sum_{q^\prime \in p^{\text{in}}} z_{q^\prime} (t - \tau_p) \right) \) \\
      \( \displaystyle q \in \mathcal{Q}_{\text{psel}} \) & \( \displaystyle z_{q} (t) = r_q^p + \sum_{q^\prime \in p^{\text{in}}} \pi_q^p z_{q^\prime} (t - \tau_p) \) \\
      {\( \displaystyle q \in \mathcal{Q}_{\text{prio}} \) & \( \displaystyle z_{q} (t) = \min_{p \in q^{\text{in}}} \Bigg( r_q^p + \sum_{q^\prime \in p^{\text{in}}} z_{q^\prime} (t - \tau_p) \)\\ 
                                                             \( \displaystyle - \sum_{q^\prime \prec_{p} q} z_{q^\prime} (t) - \sum_{q^\prime \succ_{p} q} z_{q^\prime} (t - \epsilon) \Bigg) \)}
  \end{tblr}
  \caption{General counter equations for the types of transitions in a Petri net with priorities \cite{boyet2022}.}
  \label{tab-counter}
\end{table}}{}

Collecting terms with the same time-delays, identifying \(p \in q^{\text{in}}\) with \(a \in \actions_i\) by replacing \(a\) by \(p\), \(i\) by \(q\), and \(\actions_i\) by \(q^{\text{in}}\), identifying the constants before the counter functions, \((\{1, \pi_q^p, -1\})\), with the coefficients \({(P_\tau^a)}_{ij}\), we get a time-delay system of the same form as~\eqref{eq:tds}.

\begin{proposition}
  A timed Petri net with priority is such that, for each policy, the discounted variant of its transition matrix, \(P^\sigma(\alpha)\) has no eigenvalue larger than \(1\) for \(\alpha = 0\). Thus its dynamics fulfills~\Cref{as:1}.\eqref{as:1-2}.
\end{proposition}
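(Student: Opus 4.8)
The plan is to prove that $P^\sigma(0)$ is nilpotent for every policy $\sigma$; its spectrum then reduces to $\{0\}$, which contains no point of $[1,+\infty)$ and hence establishes \Cref{as:1}.\eqref{as:1-2}.

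First I would read off the entries of $P^\sigma(0) = P_0^\sigma$, the coefficient matrix attached to the zero time-delay. Inspecting the counter equations of \Cref{tab-counter}, the only terms carrying delay exactly $0$ are the instantaneous priority terms $-\sum_{q'\prec_p q} z_{q'}(t)$ occurring for transitions $q\in\Qprio$: every holding-time term $z_{q'}(t-\tau_p)$ and every $\epsilon$-shifted term $z_{q'}(t-\epsilon)$ carries a strictly positive delay, hence contributes to $P_\tau^\sigma$ with $\tau>0$ and vanishes in $P^\sigma(\alpha)$ at $\alpha=0$. Fixing the policy $\sigma$, which selects for each $q\in\Qprio$ an upstream place $p=\sigma(q)\in q\inc$, we obtain ${(P_0^\sigma)}_{qq'}=-1$ exactly when $q'\prec_{\sigma(q)}q$, and ${(P_0^\sigma)}_{qq'}=0$ otherwise. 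Since $\prec_{\sigma(q)}$ is a strict order, the diagonal of $P_0^\sigma$ is zero.

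Next I would exploit the consistency hypothesis on the priority rules. By assumption there is a total order on the whole transition set $\transitions$ that is a common linear (acyclic) extension of all the local priority orders $\prec_p$. Enumerating the transitions $q_1,\dots,q_m$ compatibly with this order, every local relation $q'\prec_p q$ forces the index of $q'$ to be smaller than that of $q$. Hence each nonzero entry ${(P_0^\sigma)}_{q_l q_k}$ satisfies $k<l$, so that, after this relabelling, $P_0^\sigma$ is strictly lower triangular, and therefore nilpotent. Consequently its only eigenvalue is $0$, so $P^\sigma(0)$ has no eigenvalue in $[1,+\infty)$, which is precisely \Cref{as:1}.\eqref{as:1-2}.

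The only genuinely non-clerical point — the main obstacle — is to ensure that a \emph{single} global order works simultaneously for all the local orders $\prec_{\sigma(q)}$ selected by $\sigma$, and that this does not depend on the choice of $\sigma$: this is exactly what the consistency assumption supplies, since restricting attention to the place $p=\sigma(q)$ only discards priority constraints and can never create a cycle. I would also record the standing convention that holding times $\tau_p$ are strictly positive (equivalently, that the delay-$0$ dependency graph is acyclic), which guarantees that no contributions beyond the priority terms enter $P_0^\sigma$ and hence that the strict triangularity is not spoiled by spurious positive entries.
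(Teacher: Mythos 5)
Your proposal is correct and follows essentially the same route as the paper's own proof: identify the delay-zero entries $(P_0^\sigma)_{qq'}=-1$ as coming solely from the instantaneous priority terms, then use the consistency hypothesis to pick a global total order under which $P_0^\sigma$ is strictly triangular, hence nilpotent, so its spectrum is $\{0\}$. Your explicit remark that holding times must be strictly positive (so no spurious delay-zero entries arise) is a welcome clarification of a convention the paper leaves implicit, but it does not change the argument.
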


\begin{proof}
  Let \(\sigma\) be any policy. Since, by assumption, the priority orders are consistent, there exists a total order on all the transitions, we call it ''\(\prec\)''. 

  We derive the discounted equations corresponding to the Petri net counter equations in~\Cref{tab-counter}, as shown in~\Cref{tab-discounter}.
\ifbool{tabularray}{%
  \begin{table}[h]
    \begin{tblr}{colspec={ll}, rowspec={|Q[headergray]|Q[tablegray]|Q[tablegray]|Q[tablegray]}}
        Type & Discounted counter equation \\
        \( \displaystyle q \notin (\mathcal{Q}_{\text{psel}} \cup \mathcal{Q}_{\text{prio}})\)              & \( \displaystyle v_{q} (\alpha) = \min_{p \in q^{in}} \biggl( r_q^p + \sum_{\tau_p \in \mathcal{T}}\sum_{q^\prime \in p^{\text{in}}} \alpha^{\tau_p} v_{q^\prime}(\alpha) \biggr) \) \\
        \( \displaystyle q \in \mathcal{Q}_{\text{psel}}\)              & \( \displaystyle v_{q} (\alpha) =  r_q^p + \sum_{\tau_p \in \mathcal{T}}\sum_{q^\prime \in p^{\text{in}}} \pi_q^p \alpha^{\tau_p}v_{q^\prime} (\alpha) \) \\
        \( \displaystyle q \in \mathcal{Q}_{\text{prio}}\)              & { \( \displaystyle v_{q} (\alpha) = \min_{p \in q^{in}} \biggl( r_q^p + \sum_{\tau_p \in \mathcal{T}}\sum_{q^\prime \in p^{\text{in}}} \alpha^{\tau_p} v_{q^\prime}(\alpha) \) \\ 
                                                                            \( \displaystyle+ \sum_{q^\prime \prec_{p} q} (-1) \alpha^{0} v_{q^\prime} (\alpha) + \sum_{q^\prime \succ_{p} q} (-1) \alpha^\epsilon v_{q^\prime} (\alpha)  \biggr) \) }
    \end{tblr}
    \caption{Discounted problems corresponding to the general counter equations for a Petri net with priorities}
    \label{tab-discounter}
  \end{table}}{}
  We look directly at the case when \(\alpha = 0\). The only terms with a nonzero factor then are the resources, \(r\), and the terms with zero exponent \(\tau = 0\). By comparison with~\Cref{eq:discounted} we see that these solutions have the coefficient \({[P^\sigma(0)]}_{qq^\prime} = {(P^\sigma_0)}_{qq^\prime} = -1\) only if \(q' \prec_p q\), otherwise the coefficient is zero. To be precise, the coefficient may still be zero depending on the policy \(\sigma\). By ordering the matrix according to our total order \(\prec\) we get that \(P^\sigma(0)\) is a lower triangular matrix (or upper triangular matrix depending on if we order the transitions in increasing or decreasing priority) with zero diagonal and thus nilpotent. Consequently, the matrix has no eigenvalue with magnitude \(1\) or greater which completes the proof.
\end{proof}

We say that a Petri net admits a {\em positive stoichioemetric invariant}, 
if there is a positive vector \(e\) such that for
all \(i\in[n]\) and actions \(a\in A_i\), \(\sum_\tau P_{i,\tau}^a  e = e_i\).
\begin{proposition}\label{prop-nonneg}
  Suppose that a Petri net with priority admits a {\em positive stoichioemetric invariant}, and that for every policy \(\sigma\), the matrix $P^\sigma(1)$
  is nonnegative. Then, \(1\) is a semisimple eigenvalue
  of the matrix \(P^\sigma(1)\), so that~\Cref{as:2}.\eqref{as:2-1}
holds for its dynamics. 
  \end{proposition}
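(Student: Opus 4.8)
The plan is to recognize that the two hypotheses together force each matrix \(M \coloneqq P^\sigma(1)\) to be, up to a diagonal scaling, a stochastic matrix, and then to invoke the already-established \Cref{lemma:simple}. I first unwind the definition of the positive stoichiometric invariant: the condition \(\sum_\tau P_{i,\tau}^a e = e_i\) for all \(i\) and \(a \in \actions_i\) says exactly that \({[P^a(1)]}_i e = e_i\), since \(P^a(1) = \sum_\tau P_\tau^a\). Reading this row by row for a fixed policy \(\sigma\) yields \(P^\sigma(1) e = e\); that is, the positive vector \(e\) is an eigenvector of \(M\) for the eigenvalue \(1\). In particular \(1\) is genuinely an eigenvalue, so the claim of~\Cref{as:2}.\eqref{as:2-1} reduces to showing that it is semisimple.

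Next, combining \(Me = e\), \(e > 0\), and the assumed nonnegativity \(M \geq 0\), I would equip \(\R^n\) with the \(e\)-weighted sup-norm \(\|x\|_e \coloneqq \max_i |x_i|/e_i\) and check that \(M\) is nonexpansive for this norm. Using \(M \geq 0\) for the first step and \(|x_j| \leq e_j \|x\|_e\) for the second, one obtains
\[
\|Mx\|_e = \max_i \frac{\bigl|\sum_j M_{ij} x_j\bigr|}{e_i} \leq \max_i \frac{\sum_j M_{ij}\,e_j}{e_i}\,\|x\|_e = \max_i \frac{(Me)_i}{e_i}\,\|x\|_e = \|x\|_e \,,
\]
where the final equalities use \(Me = e\). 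Equivalently, the diagonal change of variables \(D = \mathrm{diag}(e)\) turns \(M\) into the nonnegative, row-stochastic matrix \(D^{-1} M D\), which is plainly nonexpansive in the ordinary sup-norm; semisimplicity of the eigenvalue \(1\) is invariant under this similarity.

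Finally, since \(M\) is nonexpansive in the norm \(\|\cdot\|_e\) and admits \(1\) as an eigenvalue, \Cref{lemma:simple} applies directly and shows that \(1\) is a semisimple eigenvalue of \(M = P^\sigma(1)\). As \(\sigma\) was arbitrary, \Cref{as:2}.\eqref{as:2-1} holds for every policy. The only genuine content beyond bookkeeping is the nonexpansiveness estimate above, so I do not anticipate a real obstacle: the one subtlety is to select the correct (\(e\)-weighted) norm, in which the nonnegativity of \(P^\sigma(1)\) and the fixed vector \(e\) combine precisely to yield nonexpansiveness, after which \Cref{lemma:simple} does the rest.
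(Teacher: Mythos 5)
Your proposal is correct, and its central observation is the same as the paper's: the stoichiometric invariant gives \(P^\sigma(1)e = e\), which together with nonnegativity means that \(D^{-1}P^\sigma(1)D\), with \(D = \mathrm{diag}(e)\), is row-stochastic---equivalently, that \(P^\sigma(1)\) is nonexpansive in the weighted sup-norm \(\|x\|_e \coloneqq \max_i |x_i|/e_i\). Where you diverge is the concluding step. The paper finishes in one line by citing the classical fact that \(1\) is a semisimple eigenvalue of a stochastic matrix~\cite[p.~42]{bermanandplemmons}, whereas you verify the estimate \(\|Mx\|_e \le \|x\|_e\) by hand and then invoke the paper's own \Cref{lemma:simple}, whose appendix proof bounds \((I-\alpha M)^{-1}\) by \((1-\alpha)^{-1}\) in the relevant norm and concludes via Kato's expansion that the pole at \(\alpha=1\) has order at most one. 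This is not circular, since \Cref{lemma:simple} is established independently of \Cref{prop-nonneg}, and it makes the argument self-contained within the paper rather than outsourced to an external reference; of course, the resolvent bound is exactly the mechanism underlying the cited stochastic-matrix fact, so the two proofs are the same argument at different levels of packaging. A minor bonus of your route: by supplying nonexpansiveness directly in \(\|\cdot\|_e\), you apply \Cref{lemma:simple} in precisely the norm in which its appendix proof operates, thereby sidestepping the norm-transfer remark made there (nonexpansiveness does not in general pass between equivalent norms), which makes your instantiation of the lemma arguably cleaner than its stated form.
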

\begin{proof}
Select any policy \(\sigma\) and let \(P\coloneqq P^\sigma(1)\).
It follows from \(Pe=e\) and \(P\) nonnegative that \(P\)
is equivalent to a stochastic matrix by a diagonal scaling
more precisely \((e_i^{-1} P_{ij} e_j)\) is stochastic,
and so \(1\) is a semisimple eigenvalue of \(P\),
by \cite[p.~42]{bermanandplemmons}.
\end{proof}
\begin{remark}
  In all our examples, in the reduced dynamics,
  all the matrices $P^\sigma(1)$ are trivially  
  nonnegative
  (the negative terms arising from priorities cancel in the sum over \(\tau\)),
  so that \Cref{prop-nonneg} can be applied.
\end{remark}
In the sequel, the term {\em generic} means ``everywhere except in a finite union of algebraic hypersurfaces''. 
\begin{proposition}
  Suppose that a Petri net with priority admits a positive stoichioemetric invariant and that the values of the parameters \(\tau\in \timedelays\) are generic,
  and suppose in addition that for all  policies $\sigma$, the matrix
  \(P^\sigma(1)\) is nonnegative.
Then, for every policy \(\sigma\), the matrix \(I + \LC_{-1}^\sigma S^\sigma\) is invertible, where \(\LC_{-1}^\sigma\) and \(S^\sigma\) are defined as in~\Cref{thm:semisimple}, replacing \(P\) by \(P^\sigma(1)\). So the dynamics generically satisfies~\Cref{as:2}.\eqref{as:2-2}.
\end{proposition}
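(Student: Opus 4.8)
The plan is to collapse the invertibility of the $n\times n$ matrix $I+\LC_{-1}^\sigma S^\sigma$ to that of a much smaller matrix, and then to recognize the latter as a ``mean holding time'' matrix that is nonsingular for positive holding times. First I would invoke \Cref{prop-nonneg}: under the hypotheses, $1$ is a semisimple eigenvalue of $P\coloneqq P^\sigma(1)$ for every policy $\sigma$. Consequently, from the resolvent expansion of \Cref{prop:kato}, the coefficient $\LC_{-1}^\sigma$ is precisely the eigenprojection of $P^\sigma(1)$ onto $\ker(I-P^\sigma(1))$. The crucial point is that $P^\sigma(1)=\sum_{\tau\in\timedelays}P_\tau^\sigma$ does not depend on the numerical values of the holding times, so neither does $\LC_{-1}^\sigma$. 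Setting $m\coloneqq\dim\ker(I-P^\sigma(1))$, I would factor the rank-$m$ projector as $\LC_{-1}^\sigma=RL$, where the columns of $R\in\R^{n\times m}$ span the right eigenspace ($PR=R$), the rows of $L\in\R^{m\times n}$ span the left eigenspace ($LP=L$), normalized by $LR=I_m$.

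The key algebraic step is the identity $I_m+LS^\sigma R=LM^\sigma R$, where $M^\sigma\coloneqq\sum_{\tau\in\timedelays}\tau P_\tau^\sigma=(P^\sigma)'(1)$. Indeed $S^\sigma=M^\sigma-P^\sigma(1)$, and $LP^\sigma(1)R=LR=I_m$, whence $LS^\sigma R=LM^\sigma R-I_m$. Combining this with Sylvester's determinant identity $\det(I_n+XY)=\det(I_m+YX)$ applied to $X=R$ and $Y=LS^\sigma$, I obtain
\[
\det\!\big(I+\LC_{-1}^\sigma S^\sigma\big)=\det\!\big(LM^\sigma R\big).
\]
Thus $I+\LC_{-1}^\sigma S^\sigma$ is invertible if and only if the $m\times m$ matrix $LM^\sigma R$ is, and since $L,R$ are fixed while $M^\sigma$ is linear in the holding times, $\det(LM^\sigma R)$ is a polynomial in the parameters $\tau\in\timedelays$. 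It therefore suffices to prove that this polynomial is not identically zero for each $\sigma$: its zero locus is then a proper algebraic hypersurface, and the union over the finitely many policies is again a finite union of hypersurfaces, outside of which \Cref{as:2}.\eqref{as:2-2} holds for all $\sigma$ simultaneously.

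To exhibit a nonvanishing point I would establish the crucial positivity. Using the stoichiometric invariant $P^\sigma(1)e=e$ and the fact that $\epsilon$ is the smallest delay (so $\tau_p>\epsilon>0$ for every genuine place), a short case analysis over the three transition types of \Cref{tab-discounter} shows that, for every nonnegative right eigenvector $R_l$, one has $M^\sigma R_l\ge 0$ entrywise, and in particular $(M^\sigma e)_q>0$ for all $q$ (the priority corrections at delays $0$ and $\epsilon$ are dominated by the positive term $\tau_p e_q$). In the single-recurrent-class case $m=1$ this already closes the argument: taking $R=e$ and $L=\mu^\top$ the positive stationary covector normalized by $\mu^\top e=1$, one gets $LM^\sigma R=\mu^\top M^\sigma e=\sum_q \mu_q(M^\sigma e)_q>0$, so the determinant is strictly positive (and genericity is not even needed).

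The hard part is the general case $m>1$. Here I would pass to the stochastic matrix $D^{-1}P^\sigma(1)D$ with $D=\mathrm{diag}(e)$, whose eigenvalue-$1$ eigenspace is indexed by the recurrent classes $E_1,\dots,E_m$, and interpret $LM^\sigma R$ as a matrix of mean holding times across these classes. The nonnegativity above yields $LM^\sigma R\ge 0$ with strictly positive row sums (the $k$-th row sum is $\mu_k^\top M^\sigma e>0$ since $\mu_k\ge 0$, $\mu_k\ne 0$, and $M^\sigma e>0$); the obstacle is that, because the priority terms sit at specific delays, the individual matrices $P_\tau^\sigma$ need not be nonnegative, so $LM^\sigma R$ need not be diagonal and one cannot read off nonsingularity directly. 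I would resolve this either by pinning down its M-matrix-like sign structure from the recurrence graph, or equivalently by showing that $\alpha=1$ is a root of $\det(I-P^\sigma(\alpha))$ of multiplicity exactly $m$ (the leading coefficient of which is $\det(LM^\sigma R)$ up to a nonzero factor). In all cases the previous computations guarantee that $\det(LM^\sigma R)$ is a nonzero polynomial in the delays, which gives the generic statement; and in the paper's examples all recurrent structures have $m=1$, so the clean computation above applies verbatim.
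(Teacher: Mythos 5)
Your overall route is the same as the paper's: after invoking \Cref{prop-nonneg} for semisimplicity, both arguments identify \(\LC_{-1}^\sigma\) with the spectral projector of \(P^\sigma(1)\) at the eigenvalue \(1\) (the paper writes it, after the diagonal scaling by the stoichiometric vector, as \(\sum_i \pi_i m_i\) over the final classes), reduce the invertibility of \(I+\LC_{-1}^\sigma S^\sigma\) to a small determinant, settle the single-final-class case by a positivity analysis on the Petri-net sign structure, and appeal to genericity when there are several classes. Your Sylvester reduction \(\det(I+\LC_{-1}^\sigma S^\sigma)=\det(L M^\sigma R)\), with \(M^\sigma=\sum_\tau \tau P_\tau^\sigma\), is correct and is a clean uniform repackaging of the paper's rank-one eigenvalue computation: in the case \(m=1\), your \(\mu^\top M^\sigma e = 1 + m_1 S^\sigma e > 0\) is exactly the paper's verification that \(m_1 S^\sigma e \neq -1\) (the terms with \(\tau\geq 1\) are nonnegative, and the negative coefficients at delays \(0\) and \(\epsilon\) come with factors \(\tau-1<0\)), and you correctly observe, as the paper does, that no genericity is needed there.

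The genuine gap is the last step of the multi-class case. The sentence ``in all cases the previous computations guarantee that \(\det(LM^\sigma R)\) is a nonzero polynomial in the delays'' is a non sequitur: what you actually proved is \(LM^\sigma R \geq 0\) with positive row sums, and a nonnegative matrix with positive row sums can perfectly well be singular (the all-ones \(m\times m\) matrix), so nothing you wrote shows the polynomial is nonzero; moreover neither of your two fallback strategies (an M-matrix sign structure, or showing that \(\alpha=1\) is a root of \(\det(I-P^\sigma(\alpha))\) of multiplicity exactly \(m\)) is carried out, and the second is essentially a restatement of what is to be proved. The paper closes this case by treating the delays as formal indeterminates and observing that \(\det(I+\LC_{-1}^\sigma S^\sigma)\) is then a nonzero polynomial; your own reduction supplies the missing witness in one line, which you did not see: \(L\) and \(R\) depend only on \(P^\sigma(1)=\sum_\tau P_\tau^\sigma\), hence not on the delay values, and \(M^\sigma\) is linear in them, so \(\det(LM^\sigma R)\) is a polynomial in the delays whose specialization at all delays equal to a common value \(t\) gives \(M^\sigma = t\,P^\sigma(1)\), hence \(LM^\sigma R = t\,LP^\sigma(1)R = t I_m\) and determinant \(t^m \neq 0\). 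With that single substitution your argument becomes a complete proof of the generic statement, matching (and slightly streamlining) the paper's.
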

\begin{proof}
For all policies \(\sigma\), consider the expansion
\[
(1-\alpha P^\sigma(1))^{-1} = \frac{C^\sigma_{-1}}{1-\alpha} +O(1) \,.
\]
We will show that the matrix
\[
I + C^\sigma_{-1} S^\sigma 
\]
is invertible for generic values of the collection of parameters $\tau$ with $\tau\in\timedelays$.
After making a diagonal scaling, we assume,
without loss of generality, that \(e\) is the unit vector (all entries one).
Then, the matrix \(P^\sigma(1)\) is a stochastic matrix, whose spectral
projector can be computed as follows using standard results on Markov chains,
see~\cite{bermanandplemmons} and especially~\cite[9.4]{rothblum}.
Let \(F_1,\dots,F_r\) denote the final classes of the digraph of \(P^\sigma(1)\),
then, \(C^\sigma_{-1}\) is a matrix of rank \(r\), with
\begin{align}
  C^\sigma_{-1} = \sum_{i\in [r]}  \pi_i  m_i
  \label{eq:specproj}
\end{align}
where \(m_i\) is unique invariant measure of \(P^\sigma(1)\) supported
by \(F_i\) (\(m_i\) row vector of dimension \(n\) but all entries outside \(F_i\) are zero),
and \(\pi_i\in \R^n\) is a potential vector---when \(e\) is the unit vector,
the entry \((\pi_i)_j\) is the probability of absorption 
by the final class \(F_i\) starting from state \(j\).

Let us begin by considering the case when there is a
single final class. Then, we next show that the invertibility
property even holds {\em universally} (without any genericity condition).
we have
\[
C^\sigma_{-1} = e m_1  \enspace .
\]
The matrix \(I + C^\sigma_{-1} S^\sigma \), which is  rank one perturbation
of the identity, is invertible
iff \(-1\) is not an eigenvalue of
\[      C^\sigma_{-1} S^\sigma
= e m_1 \sum_\tau P_\tau^\sigma(\tau-1)
\]
This matrix is of the form \(ev\) where \(v=m_1 \sum_\tau P_\tau^\sigma(\tau-1)\) is a row vector, so it is rank one, and, using cyclic conjugacy,
we see that its only possibly nonzero eigenvalue
is given  by
\[
ve= m_1 \sum_\tau P_\tau^\sigma(\tau-1) e
\]
so the matrix \(I + C^\sigma_{-1} S^\sigma \) is invertible iff
\begin{align}
-1 \neq \sum_{j\in F_1, k\in [n]} \sum_{\tau}  (m_1)_j (P_\tau^\sigma)_{jk}(\tau -1)\enspace .\label{eq:neq}
\end{align}
In the RHS, all the terms \((m_1)_j (P_\tau^\sigma)_{jk}(\tau -1)\) with \(\tau\geq 1\) are nonnegative.
For \(\tau=0\), supposing that all the places have positive integer holding times, then
\({(P_0^\sigma)}_{jk}\) can only take the values \(0\) and \(-1\), which entails that~\eqref{eq:neq}
is satisfied.

Suppose now that there are several final classes. We have
\[
C^\sigma_{-1}S^\sigma = \sum_{i\in [r]}\sum_{\tau\in \timedelays} \pi^i m_i P^\sigma_\tau (\tau-1) \enspace .
\]
Setting $\timedelays=\{\tau_1,\dots,\tau_k\}$, where $\tau_1,\dots,\tau_k$ are now thought of as formal indeterminates, we see that $\det(I + C^\sigma_{-1}S^\sigma)$ is a nonzero polynomial
in the indeterminates $\tau_1,\dots,\tau_k$. So its vanishing locus
is a (possibly empty) algebraic hypersurface, which shows that
\Cref{as:2}.\eqref{as:2-2} holds
generically.\qedhere
\end{proof}

\section{Examples of dynamical systems with priorities as Petri nets}\label{sec:catalog}
Now we apply these results to examples from previous applications. These examples generally come in the form of timed Petri nets with priority and preselection rules as described in~\S\ref{sec:motivation}. Further, in~\S\ref{sec:automaticPetri} we prove that such systems fulfill all conditions in Assumptions~\ref{as:1} and~\ref{as:2} but the one in~\Cref{as:1}.~\eqref{as:1-1}. This last assumption is instead manually confirmed for these specific examples, as we explain below. (Because of space constrains, we show the calculations explicitly only for the PFAU example.)

\subsection{The 17-18-112 PFAU Call Center Revisited}
The complete and reduced dynamics for the bi-level call center of the PFAU presented in~\cite{emergency15} are reproduced in~\eqref{eq:bilevel_simple}. These equations indeed coincide with those of a piecewise linear matrix polynomial system of the form~\eqref{eq:tds}. We represent the policies as tuples \((i, j, k)\) where each index can take one of two values, $1$ or $2$, depending on which of the first or second term in the corresponding minimum is selected. We then derive the discounted equations and through them the transition matrices, \(P^\sigma(\alpha)\). Now we can manually check Assumption~\ref{as:1}.~\eqref{as:1-1}. One way to analytically examine this is to look at the determinant \(\det (P^\sigma(\alpha)-I)\) for \(\alpha \in [0, 1)\) and each policy \(\sigma\). To illustrate, the policy \((2,2,1)\) refers to the system
\begin{equation*}
  \begin{array}{lcl}
  z_1(t) & = & N_{A} + (1-\pi_{\textrm{VU}})z_1(t-\tau_1) + z_3^\prime(t-\tau_2), \\
  z_3(t) & = & N_{P} + z_3(t-\tau_2) + z_3^\prime(t-\tau_2-\tau_3) - z_3^\prime(t), \\[1ex]
  z_3^\prime(t) & = & \pi_{\textrm{VU}}z_1(t-\tau_1). \\
  \end{array}
\end{equation*}
With the discounted transition matrix
\begin{equation*}
  P^{(2,2,2)}(\alpha) = 
  \begin{pmatrix}
    \alpha^{\tau_1} & 0 & \alpha^{\tau_2} \\
    0 & \alpha^{\tau_2} & \alpha^{\tau_2 + \tau_3} - 1 \\
    0 & \alpha^{\tau_2} - \alpha^{\epsilon} & \alpha^{\tau_2 + \tau_3}
  \end{pmatrix}\,,
\end{equation*}
and the factorized determinant
\begin{equation*}
  \det{(P^{(2,2,2)}(\alpha)-I)} = (\alpha^{\tau_1}-1)(\alpha^\epsilon-1)(\alpha^{\tau_2 + \tau_3}-1)\,,
\end{equation*}
which is nonzero by the strict positivity of the time-delays.

We get similar results for the other policies where. We only list the factorized determinants:
\begin{align*}
  & \det{(P^{(1,1,1)}(\alpha)-I)} = -1 \\
  & \det{(P^{(1,1,2)}(\alpha)-I)} = \alpha^{\tau_2+\tau_3}-1 \\
  & \det{(P^{(1,2,1)}(\alpha)-I)} = \alpha^{\tau_2}-1 \\
  & \det{(P^{(1,2,2)}(\alpha)-I)} = -(\alpha^\epsilon-1)(\alpha^{\tau_2 + \tau_3}-1) \\
  & \det{(P^{(2,1,1)}(\alpha)-I)} = \pi_{VU}\alpha^{\tau_1}\alpha^{\tau_2}(\alpha^{\tau_1}-1) \\
  & \det{(P^{(2,1,2)}(\alpha)-I)} = -(\alpha^{\tau_1}-1)(\alpha^{\tau_2 + \tau_3}-1) \\
  & \det{(P^{(2,2,1)}(\alpha)-I)} = \pi_{VU}\alpha^{\tau_1}\alpha^{\tau_2}(\alpha^{\tau_1}-1)(\alpha^{\tau_2}-1)^2\,.
\end{align*}
We see that they are all factors of nonzero elements implying that Assumption~A.(\ref{as:1-1}) is fulfilled. Note here that we have checked every policy rather than just the active ones.

\subsection{Traffic on Circular Roads with the Right-Hand Rule~\cite{farhi2019dynamicprogrammingsystemsmodeling}}
Another example is traffic on two circular one-way roads with an intersection, as described in~\cite{farhi2019dynamicprogrammingsystemsmodeling}. When two cars attempt to enter the intersection simultaneously, the car approaching from the ``right'' has priority, following the right-hand rule. Here, we create a Petri net with priorities to model this scenario. This Petri net is based on the one in~\cite{farhi2019dynamicprogrammingsystemsmodeling} but is simplified by not tracking the cars' approximate positions on the roads (\ie, we only concern ourselves with the congestion pertaining to the crossing itself, not how it may affect the traffic on the roads.). Instead, we assume a fixed time to complete a circuit around each road, tracking only the number of cars on each road, and using a priority rule to represent the right-hand rule, rather than the negative weights used in the cited work.

Specifically, imagine a north-south road intersecting an east-west road, where each road forms a circular loop. For example, a driver heading south at the intersection will circle back to approach from the north. The availability of the crossing is represented by the presence of tokens in a place, with the initial availability being determined by the resource \(A \in \{0, 1\}\). In case of conflict, this token prioritizes cars from the north over those from the east. The probabilities \(\{\pi_{ns}, \pi_{nw}, \pi_{es}, \pi_{ew}\}\) describe the likelihood of a car from the north or east proceeding south or west through the intersection. In our framework these are preselection rules. The time to pass through the crossing is denoted \(\tau_c\) while \(\tau_s\) and \(\tau_w\) represent the times to circle the north-south and east-west roads, respectively. The resulting Petri net is presented in~\Cref{fig:junction}.

\begin{figure}[h]
  \centering
  \def\tkzscl{.25}

  \definecolor{colorprio}{rgb}{1,0,0}
  \definecolor{colorgray}{gray}{0.6}

  \tikzset{place/.style={draw,circle,inner sep=2.5pt,semithick}}
  \tikzset{transition/.style={rectangle, thick,fill=black, minimum width=2mm, inner ysep=0.5pt}}
  \tikzset{token/.style={draw,circle,fill=black!80,inner sep=.35pt}}
  \tikzset{pre/.style={=stealth'}}
  \tikzset{post/.style={->,shorten >=1pt,>=stealth'}}
  \tikzset{-|/.style={to path={-| (\tikztotarget)}}, |-/.style={to path={|- (\tikztotarget)}}}
  \tikzset{Farhi/.style 2 args={dashed,dash pattern=on 1pt off 1pt,#1, postaction={draw,dashed,dash pattern=on 1pt off 1pt,#2,dash phase=1pt}}}
  \tikzset{arrowPetri/.style={>=latex,rounded corners=5pt,semithick}}

  \begin{tikzpicture}[scale=\tkzscl,font=\scriptsize]

    \def\p{2.2}

    \begin{scope}[shift={(0,0)}]
      \node[place]      (p_crossing_availability) at ($(0,0)$) {};
      \node (txt_a) at ($(p_crossing_availability) + (0.7*\p, 0)$) {\(A\)};

      \node[transition] (q_north) at ($(p_crossing_availability) + (-2*\p,-1.5*\p)$) {};
      \node (txt_zn) at ($(q_north) + (-0.8*\p, 0)$) {\(z_n\)};
      \node[transition] (q_east) at ($(p_crossing_availability) + (2*\p,-1.5*\p)$) {};
      \node (txt_ze) at ($(q_east) + (0.7*\p, 0)$) {\(z_e\)};

      \draw[->>,arrowPetri,colorprio]    (p_crossing_availability) |- ($(p_crossing_availability)+(-.5*\p,-.5*\p)$) -| (q_north);
      \draw[->,arrowPetri,colorprio]    (p_crossing_availability) |- ($(p_crossing_availability)+(.5*\p,-.5*\p)$) -| (q_east);

      \node[place]      (p_north) at ($(q_north)+(0, -1*\p)$) {};
      \node (txt_taucn) at ($(p_north) + (0.7*\p, 0)$) {\(\tau_c\)};
      \node[place]      (p_east) at ($(q_east)+(0, -1*\p)$) {};
      \node (txt_tauce) at ($(p_east) + (-0.8*\p, 0)$) {\(\tau_c\)};

      \draw[->,arrowPetri] (q_north) -- (p_north);
      \draw[->,arrowPetri] (q_east) -- (p_east);
      
      \node[transition]      (q_northsouth) at ($(p_north)+(-\p, -1.5*\p)$) {};
      \node (txt_pi_ns) at ($(q_northsouth) + (0.5*\p, 0.2*\p)$) {$\pi_{\text{ns}}$};
      \node[transition]      (q_northwest) at ($(p_north)+(\p, -1.5*\p)$) {};
      \node (txt_pi_nw) at ($(q_northwest) + (0.5*\p, 0.2*\p)$) {$\pi_{\text{nw}}$};

      \node[transition]      (q_eastsouth) at ($(p_east)+(-\p, -1.5*\p)$) {};
      \node (txt_pi_es) at ($(q_eastsouth) + (0.5*\p, 0.2*\p)$) {$\pi_{\text{es}}$};
      \node[transition]      (q_eastwest) at ($(p_east)+(\p, -1.5*\p)$) {};
      \node (txt_pi_ew) at ($(q_eastwest) + (-0.5*\p, 0.2*\p)$) {$\pi_{\text{ew}}$};

      \draw[->,arrowPetri]    (p_north) |- ($(p_north)+(-.5*\p,-.5*\p)$) -| (q_northsouth);
      \draw[->,arrowPetri]    (p_north) |- ($(p_north)+(.5*\p,-.5*\p)$) -| (q_northwest);

      \draw[->,arrowPetri]    (p_east) |- ($(p_east)+(-.5*\p,-.5*\p)$) -| (q_eastsouth);
      \draw[->,arrowPetri]    (p_east) |- ($(p_east)+(.5*\p,-.5*\p)$) -| (q_eastwest);

      \node[place] (p_south) at ($(p_north) + (0, -4*\p)$) {};
      \node (txt_taus) at ($(p_south) + (-0.8*\p, 0)$) {\(\tau_s\)};
      \node[place] (p_west) at ($(p_east) + (0, -4*\p)$) {};
      \node (txt_tauw) at ($(p_west) + (0.7*\p, 0)$) {\(\tau_w\)};

      \draw[->,arrowPetri]    (q_northsouth) |- ($(q_northsouth)+(.5*\p,-1.5*\p)$) -| (p_south);
      \draw[->,arrowPetri]    (q_northwest) |- ($(q_northwest)+(.5*\p,-0.75*\p)$) -| (p_west);

      \draw[->,arrowPetri,dashed,colorgray]    (q_northsouth) |- ($(q_northsouth)+(-1*\p,-.5*\p)$) -- ($(p_crossing_availability) + (-4*\p,\p)$) -| (p_crossing_availability);
      \draw[->,arrowPetri,dashed,colorgray]    (q_northwest) |- ($(q_northsouth)+(-1*\p,-.5*\p)$) -- ($(p_crossing_availability) + (-4*\p,\p)$) -| (p_crossing_availability);

      \draw[->,arrowPetri]    (q_eastwest) |- ($(q_eastwest)+(-.5*\p,-1.5*\p)$) -| (p_west);
      \draw[->,arrowPetri]    (q_eastsouth) |- ($(q_eastsouth)+(-.5*\p,-1.25*\p)$) -| (p_south);

      \draw[->,arrowPetri,dashed,colorgray]    (q_eastwest) |- ($(q_eastwest)+(\p,-.5*\p)$) -- ($(p_crossing_availability) + (4*\p,\p)$) -| (p_crossing_availability);
      \draw[->,arrowPetri,dashed,colorgray]    (q_eastsouth) |- ($(q_eastwest)+(\p,-.5*\p)$) -- ($(p_crossing_availability) + (4*\p,\p)$) -| (p_crossing_availability);

      \draw[->,arrowPetri] (p_west) |- ($(p_west) + (1.5*\p,-.5*\p)$) -- ($(q_east) + (1.5*\p,.5*\p)$) -- ($(q_east) + (.5*\p,.5*\p)$) -- (q_east);
      \draw[->,arrowPetri] (p_south) |- ($(p_south) + (-1.5*\p,-.5*\p)$) -- ($(q_north) + (-1.5*\p,.5*\p)$) -- ($(q_north) + (-.5*\p,.5*\p)$) -- (q_north);

    \end{scope}
    
  \end{tikzpicture}
  \caption{Two circular roads with a crossing \cite{Farhi2011}.}
  \label{fig:junction}
\end{figure}
The reduced dynamics of this system is given by
\begin{subequations}
  \begin{align}
  z_n(t) & = &&\min\bigl\{A + z_n(t-\tau_c) + z_e(t-\tau_c) - z_e(\tau-\epsilon), \nonumber \\
         &   &&\quad  \pi_{\textrm{ns}}z_n(t-\tau_c-\tau_s) + \pi_{\textrm{es}}z_e(t-\tau_c-\tau_s)\bigr\} \,, \\
  z_e(t) & = &&\min\bigl\{A + z_n(t-\tau_c) + z_e(t-\tau_c) - z_n(\tau), \notag \\
         &   &&\quad  \pi_{\textrm{nw}}z_n(t-\tau_c-\tau_w) + \pi_{\textrm{ew}}z_e(t-\tau_c-\tau_w)\bigr\} \,.
  \end{align}
  \label{eq:junction}
\end{subequations}
By the same methods as in the PFAU example, it can be checked that this system fulfills~\Cref{as:1}~\eqref{as:1-1} and thus admits an invariant half-line.

\subsection{Emergency Call Center with Monitored Reservoir (EMS-B)~\cite[Chap. 5, Sec. 3]{boyet2022}}
We look at an extension of the emergency call center where a pool of ``reservoir assistant'' is introduced to specifically handle the briefing of the second level responder in the three-way call. This way, the first level responders are not congested by the availability or not of second level responders. The exact formulation used here is borrowed from~\cite[Chap. 5, Sec. 3]{boyet2022} where it is also thoroughly detailed.
\begin{figure}[h]
  \centering
  \def\tkzscl{.25}

  \definecolor{colorARM}{rgb}{0,0,1}
  \definecolor{colorARMres}{rgb}{.92,.5,.11}
  \definecolor{colorAMU}{rgb}{1,0,0}
  \definecolor{colorexit}{rgb}{0.4,0.4,0.4}
  
  \tikzset{place/.style={draw,circle,inner sep=2.5pt,semithick}}
  \tikzset{transition/.style={rectangle, thick,fill=black, minimum width=2mm,inner ysep=0.5pt}}
  \tikzset{token/.style={draw,circle,fill=black!80,inner sep=.35pt}}
  \tikzset{pre/.style={=stealth'}}
  \tikzset{post/.style={->,shorten >=1pt,>=stealth'}}
  \tikzset{-|/.style={to path={-| (\tikztotarget)}}, |-/.style={to path={|- (\tikztotarget)}}}
  \tikzset{Farhi/.style 2 args={dashed,dash pattern=on 1pt off 1pt,#1, postaction={draw,dashed,dash pattern=on 1pt off 1pt,#2,dash phase=1pt}}}
  \tikzset{arrowPetri/.style={>=latex,rounded corners=5pt,semithick}}
  
  \begin{tikzpicture}[scale=\tkzscl,font=\scriptsize]
  
  \def\p{2.2}
  
  \begin{scope}[shift={(0,0)}]
  \node[place] (pool_arm) at ($(-2*\p,-1.5*\p)$) {};
  \node (txt_Na) at ($(pool_arm)+(-1.3,0)$) {$N_A$};
  
  \node[transition]    (q_arrivals)                          at    (0, 0) {};
  \node[place]         (p_inc_calls)                         at    ($(q_arrivals) + (0, \p)$)  {};
  \node[transition]    (q_inc_calls)                         at    ($(p_inc_calls) + (0, \p)$)  {};
  
  \node[place]         (p_arrivals)        at    ($(q_arrivals) + (0,-\p)$) {};
  \node[transition]    (q_debut_NFU)       at    ($(p_arrivals) + (0,-\p)$) {};
  \node[transition]    (q_synchro)         at    ($(p_arrivals) + (2*\p,-\p)$) {};
  
  \node[place]         (pool_res)          at    ($(q_synchro) + (2*\p,.5*\p)$) {};
  \node (txt_Nr) at ($(pool_res)+(1.1,.7)$) {$N_R$};
  
  \draw[->,arrowPetri,colorARM] (p_arrivals)  |- ($(p_arrivals)+(.5*\p,-.5*\p)$) -| (q_synchro);
  \draw[->,arrowPetri,colorARM] (q_debut_NFU) |- ($(q_debut_NFU)+(0,-.5*\p)$) -| (pool_arm);
  \draw[->,arrowPetri,colorARM] (p_arrivals)  -- (q_debut_NFU);
  \draw[->,arrowPetri,colorARM] (q_arrivals) -- (p_arrivals);
  
  \node[place]         (p_synchro)           at    ($(q_synchro)  + (0,-\p)$) {};
  \node[transition]    (q_unsynchro)         at    ($(p_synchro)  + (0,-\p)$) {};
  
  \node[place]      (pool_amu)    at    ($(pool_res)+(6*\p,0)$) {};
  \node (txt_Nm) at ($(pool_amu)+(-1.3,0)$) {$N_P$};
  
  \node[place]         (p_synchro2)          at    ($(pool_amu)  + (-4*\p,.5*\p)$) {};
  \node[transition]    (q_unsynchro2)        at    ($(p_synchro2)  + (0,-\p)$) {};
  \node[transition]    (q_debut_AMU)         at    ($(p_synchro2) + (0,\p)$) {};
  \node[place]         (p_waiting)           at    ($(q_debut_AMU)  + (2*\p,1.5*\p)$) {};
  \node[place]         (p_consult_AMU)       at    ($(q_unsynchro2)+ (0,-1*\p)$) {};
  \node[transition]    (q_end_consult_AMU)   at    ($(p_consult_AMU)+(0,-\p)$) {};
  
  \node[place]         (p_synchro3)          at    ($(p_synchro2)       + (2*\p,0)$) {};
  \node[transition]    (q_unsynchro3)        at    ($(q_unsynchro2)     + (2*\p,0)$) {};
  \node[transition]    (q_debut_AMU_2)       at    ($(q_debut_AMU)      + (2*\p,0)$) {};
  \node[transition]    (q_end_consult_AMU_2) at    ($(q_end_consult_AMU)+ (2*\p,0)$) {};
  \node[place]         (p_consult_AMU_2)     at    ($(p_consult_AMU)    + (2*\p,0)$) {};
  
  \draw[arrowPetri]    (q_inc_calls) |- ($(q_inc_calls)+(-.35*\p,.25*\p)$);
  \draw[dashed, arrowPetri]    (q_inc_calls) |- ($(q_inc_calls)+(-1.5*\p,.25*\p)$);
  \draw[->,arrowPetri] (q_inc_calls) -- (p_inc_calls);
  \draw[->,arrowPetri]                                    (p_inc_calls) -- (q_arrivals);
  \draw[->,arrowPetri,colorARM]                           (q_unsynchro) -- ($(q_unsynchro)+(0,-.5*\p)$)  -|  (pool_arm);
  
  \draw[->,arrowPetri,Farhi={colorARM}{colorARMres}]    (q_synchro) -- (p_synchro);
  \draw[->,arrowPetri,Farhi={colorARM}{colorARMres}]    (p_synchro) -- (q_unsynchro);
  
  \draw[->,arrowPetri,colorARMres]    (pool_res) -- ($(q_synchro)+(.5*\p,.5*\p)$) -- (q_synchro);
  \draw[->,arrowPetri,colorARMres]    (q_unsynchro) |- ($(q_unsynchro)+(.5*\p,-.5*\p)$)-| (pool_res);
  
  \draw[->>>,arrowPetri,colorARMres]   (pool_res) |- ($(q_debut_AMU)+(-.75*\p,.75*\p)$) -- (q_debut_AMU);
  \draw[->,arrowPetri,colorARMres]    (q_unsynchro2) |- ($(q_unsynchro2)+(-\p,-.5*\p)$) -- (pool_res);
  
  \draw[->>,arrowPetri,colorARMres]  (pool_res) |- ($(q_debut_AMU_2)+(-.75*\p,.75*\p)$) -- (q_debut_AMU_2);
  \draw[->,arrowPetri,colorARMres]    (q_unsynchro3) |- ($(q_unsynchro2)+(-\p,-.5*\p)$) -- (pool_res);
  
  \draw[->,arrowPetri,Farhi={colorAMU}{colorARMres}]    (q_debut_AMU) -- (p_synchro2);
  \draw[->,arrowPetri,Farhi={colorAMU}{colorARMres}]    (p_synchro2) -- (q_unsynchro2);
  \draw[->,arrowPetri,Farhi={colorAMU}{colorARMres}]    (q_debut_AMU_2) -- (p_synchro3);
  \draw[->,arrowPetri,Farhi={colorAMU}{colorARMres}]    (p_synchro3) -- (q_unsynchro3);
  
  \draw[->,arrowPetri]    (p_waiting) |- ($(p_waiting)+(-.5*\p,-.5*\p)$) -| (q_debut_AMU);
  \draw[->,arrowPetri]    (p_waiting) -- (q_debut_AMU_2);
  \draw[->,arrowPetri]    (q_unsynchro) |- ($(q_unsynchro)+(1.5*\p,-1*\p)$) -| ($(pool_amu)+(.5*\p,0)$) |- ($(p_waiting)+(0,.75*\p)$) -- (p_waiting);
  
  \draw[->,arrowPetri,colorARM]                           (pool_arm)    |- ($(q_arrivals)+(-.5*\p,.5*\p)$) -- (q_arrivals);
  
  \node (txt_taus) at ($(p_synchro)+(.5*\p,0)$) {$\tau_2$};
  \node (txt_taus) at ($(p_synchro2)+(.5*\p,0)$) {$\tau_2$};
  \node (txt_taus) at ($(p_synchro3)+(.5*\p,0)$) {$\tau_2$};
  \node (txt_tau1) at ($(p_arrivals.center)+(.5*\p,0)$) {${\tau}_1$};
  \node (txt_tau3) at ($(p_consult_AMU.center)+(.5*\p,0)$) {${\tau}_3$};
  \node (txt_tau3) at ($(p_consult_AMU_2.center)+(.5*\p,0)$) {${\tau}_3$};
  \node (txt_pi)  at ($(p_arrivals.center)+(-1.4,-1.65)$) {$1\!-\!\pi$};
  \node (txt_pi3) at ($(p_arrivals.center)+(3.7, -1.65)$){$\pi$};
  \node (txt_alpha3) at ($(p_waiting.center)+(-4.8,-1)$){$\alpha$};
  \node (txt_alpha)  at ($(p_waiting.center) +(1.4,-1)$) {$1\!-\!\alpha$};
  
  \draw[->,arrowPetri,colorAMU]    (q_unsynchro2)      -- (p_consult_AMU);
  \draw[->,arrowPetri,colorAMU]    (p_consult_AMU)     -- (q_end_consult_AMU);
  \draw[->,arrowPetri,colorAMU]    (q_end_consult_AMU) -- ($(q_end_consult_AMU)+(0,-1)$) -| (pool_amu);
  \draw[->>,arrowPetri,colorAMU]   (pool_amu)      |- ($(q_debut_AMU)+(.5*\p,.5*\p)$)      -- (q_debut_AMU);
  
  \draw[->,arrowPetri,colorAMU]    (q_unsynchro3)       -- (p_consult_AMU_2);
  \draw[->,arrowPetri,colorAMU]    (p_consult_AMU_2)     -- (q_end_consult_AMU_2);
  \draw[->,arrowPetri,colorAMU]    (q_end_consult_AMU_2) -- ($(q_end_consult_AMU_2)+(0,-1)$) -| (pool_amu);
  \draw[->,arrowPetri,colorAMU]    (pool_amu)      |- ($(q_debut_AMU_2)+(.5*\p,.5*\p)$)      -- (q_debut_AMU_2);
  
  \node (txt_z0) at ($(q_inc_calls)+(1*\p,0)$) {$z_0 = \lambda t$};
  \node (txt_z1) at ($(q_arrivals)+(.5*\p,0)$) {$z_1$};
  \node (txt_z2) at ($(q_debut_NFU)+(.5*\p,0)$) {$z_2$};
  \node (txt_z3) at ($(q_synchro)+(.5*\p,0)$) {$z_3$};
  \node (txt_z4) at ($(q_unsynchro)+(.5*\p,0)$) {$z_4$};
  \node (txt_z5) at ($(q_debut_AMU)+(.5*\p,0)$) {$z_5$};
  \node (txt_z6) at ($(q_unsynchro2)+(.5*\p,0)$) {$z_6$};
  \node (txt_z7) at ($(q_end_consult_AMU)+(.5*\p,0)$) {$z_7$};
  \node (txt_z5) at ($(q_debut_AMU_2)+(.5*\p,0)$) {$z_5'$};
  \node (txt_z6) at ($(q_unsynchro3)+(.5*\p,0)$) {$z_6'$};
  \node (txt_z7) at ($(q_end_consult_AMU_2)+(.5*\p,0)$) {$z_7'$};
  
  \end{scope}
  
  \end{tikzpicture}	
  
  \caption{Medical emergency call center with a monitored reservoir (EMS-B)~\cite{boyet2021}.}
  \label{fig:SAMU2}
  \end{figure}

  This example illustrates a more complex Petri net with priorities and demonstrates the potential nonconcave behavior of the throughput in our dynamics. Specifically, adding more second level responders may {\em slow down\/} the treatment of very urgent calls due to occupying reservoir assistants with the urgent calls, effectively disallowing them from being briefing these very urgent calls until they are done with the urgent briefings.

  Despite this, the system can be proved to fulfill~\Cref{as:1}~\eqref{as:1-1} and thus having an invariant half-line. The dynamics and half-line solution can be found in~\cite[Chap. 5, Sec. 3]{boyet2022}.

\if{\subsection{The ASGARD center~\cite{fafesoali}}
This example is lifted from the end-of-studies project~\cite{fafesoali}. 
Clarify the model. Say that ASGARD is a technician center of ENEDIS (subsidiary company of EDF
in charge of Electricity distribution in France).

\begin{figure}[h]
  \centering
  \def\tkzscl{.29}
  \vspace{-.2cm}

  \definecolor{colorARM}{rgb}{0,0,1}
  \definecolor{colorARMres}{rgb}{0.92,0.5,0.11}
  \definecolor{colorAMU}{rgb}{1,0,0}
  \definecolor{colorexit}{rgb}{0.4,0.4,0.4}
  
  \tikzset{place/.style={draw,circle,inner sep=2.5pt,semithick}}
  \tikzset{transition/.style={rectangle, thick,fill=black, minimum width=2mm,inner ysep=0.5pt, minimum width=2mm}}
  \tikzset{jeton/.style={draw,circle,fill=black!80,inner sep=.35pt}}
  \tikzset{pre/.style={=stealth'}}
  \tikzset{post/.style={->,shorten >=1pt,>=stealth'}}
  \tikzset{-|/.style={to path={-| (\tikztotarget)}}, |-/.style={to path={|- (\tikztotarget)}}}
  \tikzset{Farhi/.style 2 args={dashed,dash pattern=on 1pt off 1pt,#1, postaction={draw,dashed,dash pattern=on 1pt off 1pt,#2,dash phase=1pt}}}
  \tikzset{arrowPetri/.style={>=latex,rounded corners=5pt,semithick}}
  
  \begin{tikzpicture}[scale=\tkzscl,font=\scriptsize]
  
  \def\p{2.4}
  
  \begin{scope}[shift={(0,4)}]
  \node[place] (pool_supervisors) at ($(-2*\p,-1.5*\p)$) {};
  \node (txt_Ns) at ($(pool_supervisors)+(-1.3,0)$) {$N_S$};
  
  \node[transition]    (q_incoming)                          at    (0, 0) {};
  \node[place]         (p_inc_calls)                         at    ($(q_incoming) + (0, \p)$)  {};
  \node[transition]    (q_inc_calls)                         at    ($(p_inc_calls) + (0, \p)$)  {};
  \node[place]         (p_arrivals)       at    ($(q_incoming) + (0,-\p)$) {};
  \node[transition]    (q_begin_U)        at    ($(p_arrivals) + (0,-1.5*\p)$) {};
  \node[transition]    (q_begin_VU)       at    ($(p_arrivals) + (2*\p,-1.5*\p)$) {};
  
  \draw[->,arrowPetri,Farhi={colorARM}{black}] (p_arrivals)  -- (q_begin_U);
  \draw[->,arrowPetri,Farhi={colorARM}{black}] (q_incoming) -- (p_arrivals);
  \draw[->,arrowPetri,Farhi={colorARM}{black}] (p_arrivals)  |- ($(p_arrivals)+(.5*\p,-.5*\p)$) -| (q_begin_VU);

  \node[place]         (p_U)           at    ($(q_begin_U)  + (0,-\p)$) {};
  \node[transition]    (q_U)         at    ($(p_U)  + (0,-\p)$) {};
  \node[place]         (p_VU)           at    ($(q_begin_VU)  + (0,-\p)$) {};
  \node[transition]    (q_VU)         at    ($(p_VU)  + (0,-\p)$) {};

  \node[place]         (p_U1)           at    ($(q_U)  + (0,-\p)$) {};
  \node[transition]    (q_U1)         at    ($(p_U1)  + (0,-\p)$) {};
  \node[place]         (p_VU1)           at    ($(q_VU)  + (0,-\p)$) {};
  \node[transition]    (q_VU1)         at    ($(p_VU1)  + (0,-\p)$) {};

  \node[place]         (p_U2)           at    ($(q_U1)  + (0,-\p)$) {};
  \node[transition]    (q_U2)         at    ($(p_U2)  + (0,-\p)$) {};
  \node[place]         (p_VU2)           at    ($(q_VU1)  + (0,-\p)$) {};
  \node[transition]    (q_VU2)         at    ($(p_VU2)  + (0,-\p)$) {};

  \node[place]         (p_U3)           at    ($(q_U2)  + (0,-\p)$) {};
  \node[transition]    (q_U3)         at    ($(p_U3)  + (0,-\p)$) {};
  \node[place]         (p_VU3)           at    ($(q_VU2)  + (0,-\p)$) {};
  \node[transition]    (q_VU3)         at    ($(p_VU3)  + (0,-\p)$) {};

  \draw[->,arrowPetri,colorARM] (pool_supervisors) |- ($(q_incoming)+(-1*\p,.5*\p)$) -- ($(q_U)+(-1*\p,.5*\p)$)  -- ($(q_U)+(-0.5*\p,.5*\p)$) -- (q_U);
  \draw[->,arrowPetri,colorARM] (pool_supervisors) |- ($(q_incoming)+(-1*\p,.5*\p)$) -- ($(q_U)+(-1*\p,.5*\p)$)  -- ($(q_VU)+(-0.5*\p,.5*\p)$) -- (q_VU);
  \draw[->,arrowPetri,colorARM] (pool_supervisors) |- ($(q_incoming)+(-1*\p,.5*\p)$) -- ($(q_U2)+(-1*\p,.5*\p)$)  -- ($(q_U2)+(-0.5*\p,.5*\p)$) -- (q_U2);
  \draw[->,arrowPetri,colorARM] (pool_supervisors) |- ($(q_incoming)+(-1*\p,.5*\p)$) -- ($(q_U2)+(-1*\p,.5*\p)$)  -- ($(q_VU2)+(-0.5*\p,.5*\p)$) -- (q_VU2);

  \draw[arrowPetri]    (q_inc_calls) |- ($(q_inc_calls)+(-.35*\p,.25*\p)$);
  \draw[dashed, arrowPetri]    (q_inc_calls) |- ($(q_inc_calls)+(-1.5*\p,.25*\p)$);
  \draw[->,arrowPetri] (q_inc_calls) -- (p_inc_calls);
  \draw[->,arrowPetri] (p_inc_calls) -- (q_incoming);

  \draw[->,arrowPetri, colorARM] (q_begin_U) |- ($(q_begin_U)+(0.5*\p,-0.5*\p)$) -- ($(q_begin_VU)+(0.5*\p,-0.5*\p)$) -- ($(q_VU3)+(0.5*\p,-0.5*\p)$) -| (pool_supervisors);
  \draw[->,arrowPetri, colorARM] (q_begin_VU) |- ($(q_begin_VU)+(0.5*\p,-0.5*\p)$) -- ($(q_VU3)+(0.5*\p,-0.5*\p)$) -| (pool_supervisors);
  \draw[->,arrowPetri, colorARM] (q_U1) |- ($(q_U1)+(0.5*\p,-0.5*\p)$) -- ($(q_VU1)+(0.5*\p,-0.5*\p)$) -- ($(q_VU3)+(0.5*\p,-0.5*\p)$) -| (pool_supervisors);
  \draw[->,arrowPetri, colorARM] (q_VU1) |- ($(q_VU1)+(0.5*\p,-0.5*\p)$) -- ($(q_VU3)+(0.5*\p,-0.5*\p)$) -| (pool_supervisors);

  \draw[->,arrowPetri,colorARM] (q_U3) -- ($(q_U3)+(0,-.5*\p)$)  -|  (pool_supervisors);
  \draw[->,arrowPetri,colorARM] (q_VU3) -- ($(q_VU3)+(0,-.5*\p)$)  -|  (pool_supervisors);
  
  \draw[->,arrowPetri]    (q_begin_U) -- (p_U);
  \draw[->,arrowPetri]    (p_U) -- (q_U);
  \draw[->,arrowPetri]    (q_begin_VU) -- (p_VU);
  \draw[->,arrowPetri]    (p_VU) -- (q_VU);

  \draw[->,arrowPetri,Farhi={colorARM}{black}]    (q_U) -- (p_U1);
  \draw[->,arrowPetri,Farhi={colorARM}{black}]    (p_U1) -- (q_U1);
  \draw[->,arrowPetri,Farhi={colorARM}{black}]    (q_VU) -- (p_VU1);
  \draw[->,arrowPetri,Farhi={colorARM}{black}]    (p_VU1) -- (q_VU1);

  \draw[->,arrowPetri]    (q_U1) -- (p_U2);
  \draw[->,arrowPetri]    (p_U2) -- (q_U2);
  \draw[->,arrowPetri]    (q_VU1) -- (p_VU2);
  \draw[->,arrowPetri]    (p_VU2) -- (q_VU2);

  \draw[->,arrowPetri,Farhi={colorARM}{black}]    (q_U2) -- (p_U3);
  \draw[->,arrowPetri,Farhi={colorARM}{black}]    (p_U3) -- (q_U3);
  \draw[->,arrowPetri,Farhi={colorARM}{black}]    (q_VU2) -- (p_VU3);
  \draw[->,arrowPetri,Farhi={colorARM}{black}]    (p_VU3) -- (q_VU3);

  \draw[->,arrowPetri,colorARM]   (pool_supervisors)    |- ($(q_incoming)+(-.5*\p,.5*\p)$) --(q_incoming);
  
  \node (txt_pi1) at ($(p_arrivals.west) + (0.1*\p,-0.5*\p)$) [left] {$\pi_1$};
  \node (txt_pi2) at ($(p_arrivals.east) + (0,-0.1*\p)$) [right] {$\pi_2$};
  
  \node (txt_z0) at ($(q_inc_calls)+(1*\p,0)$) {$z_0 = \lambda t$};
  \node (txt_z1) at ($(q_incoming)+(.5*\p,0)$) {$z_1$};
  \node (txt_z2) at ($(q_U)+(-0.5*\p,0)$) {$z_2$};
  \node (txt_z2p) at ($(q_VU)+(-0.5*\p,0)$) {$z_2^{\prime}$};
  \node (txt_z3) at ($(q_U2)+(-0.5*\p,0)$) {$z_3$};
  \node (txt_z3p) at ($(q_VU2)+(-0.5*\p,0)$) {$z_3^\prime$};
  \end{scope}
  \end{tikzpicture}	
  \caption{Asgard service call center~\cite{fafesoali}}
  \label{fig:asgard}
\end{figure}
}\fi
\section*{Concluding remarks}
We gave sufficient conditions which entail that a piecewise linear dynamics
admits an invariant half-line. This provides a partial answer to an open question
which arose in a series of works~\cite{maxplusblondel,Farhi2011,emergency15,boyet2021,boyet2022}. This provides an alternative to earlier approaches, relying
on exhaustive policy enumeration and polyhedral computations. All our assumptions, but one,
are automatically satisfied for the class of examples of call centers
and of road networks under consideration. Our last assumption turns out
to be satisfied in all our examples, but its verification remains non trivial.
We conjecture that it could be implied by appropriate topological conditions
on the Petri net.  We leave the investigation of this question
for further work.
\section*{Acknowledgement}
The authors were partially supported by the URGE project of the Bernoulli Lab,
joint between AP-HP \& INRIA. 
 \bibliographystyle{alpha}
\bibliography{note}
\appendix
\section{Proof of Lemma~\ref{lemma-lexicographic}}
\label{append-lemma-lexicographic}
  We begin by showing that the existence of an invariant half line of~\Cref{eq:tds} implies that the lexicographic system is satisfied.
  Substituting in the half line solution into~\Cref{eq:tds} gives
  \begin{align}
    u_i + t \rho_i = \min_{a \in \actions_i} \biggl( r_i^a + \sum_\tau {[P_{\tau}^a]}_i (u + (t-\tau) \rho) \biggr) \,, \label{eq:halfline}
  \end{align}
  for \(t \geq t_1\).
  To show that~\eqref{eq:lexeta} is satisfied we divide by \(t\) on both sides to get
  \begin{align*}
    \frac{u_i}{t} + \rho_i = \min_{a \in \actions_i} \biggl( \frac{r_i^a + \sum_\tau {[P_{\tau}^a]}_i (u - \tau \rho)}{t} + \sum_\tau {[P_{\tau}^a]}_i \rho  \biggr) \,,
  \end{align*}
  where taking the limit \(t \to \infty\) yields
  \begin{align*}
    \rho_i = \min_{a \in \actions_i} {[P^a(1)]}_i \rho \,,
  \end{align*} 
  by continuity.

  To show~\eqref{eq:lexu} we rewrite condition~\eqref{eq:halfline} as
  \begin{align}
    u_i = \min_{a \in \actions_i} \biggl( r_i^a + \sum_\tau {[P_{\tau}^a]}_i u - \sum_\tau {[P_{\tau}^a]}_i \tau \rho + t\biggl( \sum_\tau {[P_{\tau}^a]}_i \rho - \rho_i \biggr) \biggr) \,. \label{eq:halfline-u}
  \end{align}
  If \(a \not\in \actions_i^\star\) the factor of \(t\), \(\sum_\tau {[P_{\tau}^a]}_i \rho - \rho_i \) is strictly positive and so the terms inside the minimum tends to \(+\infty\) as \(t \to \infty\).
  We deduce that for \(t\) large enough the minimum is achieved only for \(a \in \actions_i^\star\) and so with the identification~\eqref{eq:lexeta} we get
  \begin{align*}
    u_i & = \min_{a \in \actions_i^\star} \biggl( r_i^a + \sum_\tau {[P_{\tau}^a]}_i u - \sum_\tau {[P_{\tau}^a]}_i \tau \rho + t\left(\rho_i - \rho_i \right) \biggr) \\
    & =\min_{a \in \actions_i^\star} \biggl( r_i^a + \sum_\tau {[P_{\tau}^a]}_i u - \sum_\tau {[P_{\tau}^a]}_i \tau \rho \biggr) \\
    & =\min_{a \in \actions_i^\star} \biggl( r_i^a + \sum_\tau {[P_{\tau}^a]}_i u - {[P_{\tau}^a]}_i \rho - \sum_\tau {[P_{\tau}^a]}_i (\tau-1) \rho \biggr) \\ 
    & =\min_{a \in \actions_i^\star} \biggl( r_i^a + \sum_\tau {[P_{\tau}^a]}_i u - \rho_i - {[S^a]}_i \rho \biggr) \,,
  \end{align*}
  which shows the implication in one direction.

  Conversely, suppose that~\eqref{eq:lexeta} and~\eqref{eq:lexu} hold. It suffices to check that~\eqref{eq:halfline-u} is satisfied for large enough \(t\).
  By definition of \(\actions_i^\star\), every term inside the minimum arising from \(a \in \actions_i \setminus \actions_i^\star\) tends to \(+\infty\) when \(t \to \infty\). Thus, there exists some \(t_1\) such that for all \(t \geq t_1\) the expression
  \begin{align*}
    \min_{a \in \actions_i} \biggl( r_i^a + \sum_\tau {[P_{\tau}^a]}_i u - \sum_\tau {[P_{\tau}^a]}_i \tau \rho + t \biggl( \sum_\tau {[P_{\tau}^a]}_i \rho - \rho_i \biggr) \biggr)
  \end{align*}
  reduces to
  \begin{align*}
    \min_{a \in \actions_i^\star} \biggl( r_i^a + \sum_\tau {[P_{\tau}^a]}_i u - \sum_\tau {[P_{\tau}^a]}_i \tau \rho \biggr) \,,
  \end{align*}
  which by~\eqref{eq:lexu} coincides with \(u_i\) recovering~\eqref{eq:halfline-u}.
  \hfill\qed

  \section{Proof of Lemma~\ref{lemma:simple}}\label{append:lemma:simple}
The matrix    \(P\) being nonexpansive specifically implies that \(P\) is nonexpansive
  with respect to the weighted sup-norm (by the equivalence of norms on \(\R^n\))
  \[
  \|x\|_{e,\infty}\coloneqq \max_i |\frac{x_i}{e_i}| \enspace,
  \]
  meaning that
  \[
  \|Px\|_{e\infty}\leq \|x\|_{e,\infty} \,.
  \]
  Consider now the equation
  \[
  y= \alpha P y + r
  \]
  where \(r\in \R^n\). We have
  \[
  u= (I-\alpha P)^{-1} r
  \]
  and we can deduce (easy induction: \(y= \lim y_k\), \(y_k = r +\alpha P r + \alpha^2 P^r + \dots \) up to rank \(k-1\), and
  control the norm),
  show that
  \[
  \|u\|_{e,\infty} \leq \frac{\|r\|_{e,\infty}}{1-\alpha}
  \]
  It follows that the resolvent \((I-\alpha P)^{-1}\) cannot have a pole of order
  greater than \(1\) at \(\alpha=1\), and so, by~\cite{kato}, \(1\) is semisimple.
\hfill\qed 

\end{document}